\documentclass[11pt,reqno]{amsart}

\usepackage{amsmath, amsthm, amssymb}
\usepackage{amsfonts}
\usepackage{bm,mathrsfs}
\usepackage{pifont}
\usepackage{enumitem}
\usepackage{graphicx}
\usepackage{subcaption}

\usepackage{hyperref}

\usepackage[ansinew]{inputenc}
\usepackage[dvips]{epsfig}
\usepackage{graphicx}
\usepackage[english]{babel}
\usepackage{mathrsfs}

\usepackage{thmtools}
\theoremstyle{plain}
\declaretheorem[title=Theorem, parent=section]{theorem}
\declaretheorem[title=Lemma,sibling=theorem]{lemma}
\declaretheorem[title=Proposition,sibling=theorem]{proposition}

\declaretheorem[title=Question, parent=section]{question}
\declaretheorem[title=Remark, parent=section]{remark}
\declaretheorem[title=Definition,parent=section]{definition}

\theoremstyle{definition}
\declaretheorem[title=Example, sibling=theorem]{example}
\declaretheorem[title=Assumption, numbered=no]{assumption*}

\numberwithin{equation}{section}

\usepackage[backgroundcolor=white, bordercolor=blue,
linecolor=blue]{todonotes}

\usepackage{dsfont}
\usepackage{bbm}

\newcommand{\average}{{\mathchoice {\kern1ex\vcenter{\hrule height.4pt
width 6pt depth0pt} \kern-9.7pt} {\kern1ex\vcenter{\hrule
height.4pt width 4.3pt depth0pt} \kern-7pt} {} {} }}

\begin{document}
\allowdisplaybreaks
\title{Global $ C^{1}$ regularity for degenerate fully nonlinear elliptic equations}

\author{Jiangwen Wang}
\author{Feida Jiang$^*$}

\address{School of Mathematics and Shing-Tung Yau Center of Southeast University, Southeast University, Nanjing 211189, P.R. China}
\email{\url{jiangwen\_wang@seu.edu.cn}}

\address{School of Mathematics and Shing-Tung Yau Center of Southeast University, Southeast University, Nanjing 211189, P.R. China; Shanghai Institute for Mathematics and Interdisciplinary Sciences, Shanghai, 200433, P.R. China}
\email{\url{jiangfeida@seu.edu.cn}}

\date{\today}
	\thanks{*corresponding author}

\keywords{Borderline regularity; degenerate equations; non-homogeneous}

\subjclass[2020]{35J70, 35D40}

\allowdisplaybreaks

\begin{abstract}
In this paper, we prove global $C^{1}$ regularity for Dirichlet problem of a class of degenerate fully nonlinear elliptic equations on $ C^{2} $ domain. This corresponds to the boundary counterpart of the interior $ C^{1}$ regularity results by \cite{APPT22} and \cite{AN25}. By an example, we show that $ C^{1,\alpha} $ regularity of boundary datum is sharp within the scale of H\"{o}lder spaces. Furthermore, we also provide boundary $ C^{1,\beta} $ regularity for a class of singular fully nonlinear elliptic equations.
\end{abstract}

\allowdisplaybreaks

\maketitle

\section{Introduction}\label{Intro}
This paper deals with the following boundary value problem:
\begin{equation}
\label{Maineq1}
\left\{
     \begin{alignedat}{2}
       \big[\sigma_{1}(|Du|)+a(x)\sigma_{2}(|Du|)\big] F(D^{2}u) & = f        \quad  &&  \text{in}  \ \ \Omega     ,    \\
          u  &= g     \quad  &&  \text{on}   \ \  \partial \Omega ,        \\
     \end{alignedat}
     \right.
\end{equation}
where $ F: \mathrm{Sym}(n) \rightarrow \mathbb{R} $ denotes a uniformly $ (\lambda, \Lambda)$-elliptic operator, $ \sigma_{i}: [0,+\infty) \rightarrow [0,+\infty), i=1,2 $ stand for moduli of continuity, $ 0\leq  a(x) \in C^{0}(\Omega) $, $ f \in C^{0}(\Omega) \cap L^{\infty}(\Omega) $, and $ g \in C^{1,\alpha}(\partial \Omega) $ for some $ \alpha \in (0,1) $. In addition, $ \Omega \subset \mathbb{R}^{n} $ is assumed to be a bounded $ C^{2}$-domain.

Double-phase degenerate fully nonlinear equations arise naturally in numerous fields, including optimization and stochastic control. Models featuring nonhomogeneous degeneracy laws are particularly important in materials science, as they provide a more faithful characterization of intricate physical phenomena. These nonhomogeneous structures are crucial for capturing the diverse behavioral characteristics observed in practical applications. As a concrete example, equation \eqref{Maineq1} originates from image restoration problems \cite{HH21}.

A key motivation for investigating this class of degenerate problems stems from the calculus of variations, particularly the model proposed by Zhikov in \cite{Z93} for the study of homogenization theory and the Lavrentiev phenomenon:
\begin{equation*}
  u \longmapsto \int (|Du|^{p} + a(x)|Du|^{q})dx.
\end{equation*}
This functional serves as a reliable model for strongly anisotropic materials, whose gradient growth exponents govern position-dependent hardening behaviors.

The nonvariational counterparts of the models discussed above take the form of degenerate fully nonlinear equations. One of the most widely studied prototypes given by
\begin{equation}
\label{Se1:eq2}
|Du|^{p} F(D^{2}u) = f   \quad  \text{in}  \quad \Omega.
\end{equation}
The groundbreaking work was made by Imbert-Silvestre in \cite{IS13} where they showed an interior $ C^{1,\alpha} $ regularity of viscosity solution to \eqref{Se1:eq2}. Shortly after, in \cite{ART15}, Ara\'{u}jo {\em et al.} proved an optimal interior $ C^{1,\alpha} $ regularity when $ F $ is convex/concave. In a recent paper \cite{APPT22}, Andrade {\em et al.} considered a degenerate fully nonlinear equation of the form
\begin{equation}\label{Se1:eq3}
  \sigma(|Du|) F(D^{2}u) =f   \quad   \text{in}   \quad   \Omega,
\end{equation}
and established interior $ C^{1} $ regularity for solutions of \eqref{Se1:eq3} provided that $ \sigma: [0,+\infty) \rightarrow [0,+\infty)$ has an inverse whose modulus of continuity is Dini-continuous near the origin. Following the same idea
with more careful analysis, Andrade-Nascimento in \cite{AN25} extended the result of \cite{APPT22} to
\begin{equation}
\label{Se1:eq4}
  \big[\sigma_{1}(|Du|)+a(x)\sigma_{2}(|Du|)\big] F(D^{2}u) = f        \quad   \text{in}  \quad  \Omega.
\end{equation}

Much progress has been made in understanding the boundary regularity of solution to \eqref{Se1:eq2} under the Dirichlet boundary condition. For gradient H\"{o}lder boundary data on a $ C^{2} $ domain, Birindelli-Demengel \cite{BD14} established global $ C^{1,\alpha} $ regularity. Ara\'{u}jo-Sirakov \cite{AS23} further obtained sharp boundary $ C^{1,\alpha} $ regularity. In addition, the work in \cite{LL23} proves $C^{1,\alpha}$ regularity on $C^{1,\alpha}$ domains without employing the boundary flattening argument. For a thorough survey of boundary regularity results concerning Dirichlet problems, we refer interested readers to \cite{BBLL24, BSRR23}.

Thus a natural question arises:

\begin{question}\label{Se1:question1}
Can we establish the global $ C^{1} $ regularity for viscosity solutions of \eqref{Se1:eq4} under optimal boundary datum?
\end{question}

The main purpose of this paper is to provide a clear affirmative answer to question \ref{Se1:question1}. The obtained result is new even for the case $ a(x) \equiv 0 $.

Before proceeding further, we state the main assumptions of this paper.

\label{A1} {\bf (A1).} The operator $ F $ is uniformly elliptic with $F(\mathrm{O}_n) = 0$; namely, there exist constants $0 < \lambda \leq \Lambda < \infty $ such that
$$  \mathscr{P}^{-}_{\lambda,\Lambda}(\mathrm{N}) \leq F(\mathrm{M}+\mathrm{N}) - F(\mathrm{M})
  \leq \mathscr{P}^{+}_{\lambda,\Lambda}(\mathrm{N}),$$
for all $\mathrm{M}, \mathrm{N} \in \mathrm{Sym}(n)$,
where $\mathscr{P}^{\pm}$ denote the Pucci extremal operators in \cite{CC95}, namely,
\[
  \mathscr{P}^{+}_{\lambda,\Lambda}(\mathrm{X})
  := \lambda \sum_{e_{i}<0} e_{i}(\mathrm{X}) + \Lambda \sum_{e_{i}>0} e_{i}(\mathrm{X}),
  \quad \text{and} \quad
  \mathscr{P}^{-}_{\lambda,\Lambda}(\mathrm{X})
  := \lambda \sum_{e_{i}>0} e_{i}(\mathrm{X}) + \Lambda \sum_{e_{i}<0} e_{i}(\mathrm{X});
\]

\label{A2} {\bf (A2).} $ a(x) \geq 0 $ and $ a(x) \in C^{0}(\Omega) $;

\label{A3} {\bf (A3).} $ f \in C^{0}(\Omega) \cap L^{\infty}(\Omega) $ and $ g \in C^{1,\alpha}(\partial \Omega) $ for some $ \alpha \in (0,1) $;

\label{A4} {\bf (A4).} $ \Omega \subset \mathbb{R}^{n} $ is a bounded $ C^{2}$-domain;

\label{A5} {\bf (A5).} (Monotonicity of $ \sigma_{i} $, $ i=1,2$ )

--\label{A5a} {\bf (A5a)} We suppose that $ \sigma_{1}, \sigma_{2}: [0,+\infty) \rightarrow [0,+\infty) $ are continuous and monotone increasing with
\begin{equation*}
  \lim_{t \rightarrow 0} \sigma_{i}(t) = 0, \quad i=1,2;
\end{equation*}

--\label{A5b} {\bf (A5b)} We suppose that $ \sigma_{1}, \sigma_{2}: [0,+\infty) \rightarrow [0,+\infty) $ are continuous and monotone decreasing with
\begin{equation*}
  \lim_{t \rightarrow 0} \sigma_{i}(t) >0, \quad  i=1,2.
\end{equation*}

Furthermore, in both cases, we assume
\begin{equation*}
  \sigma_{1}(t) \geq \sigma_{2}(t), \quad t \in [0,1],
\end{equation*}
and, in particular,
\begin{equation*}
  \sigma_{1}(1)  \geq  \sigma_{2}(1)  \geq  1;
\end{equation*}

\label{A6} {\bf (A6).} The function $ \sigma_{2} $ admits an inverse $ \sigma_{2}^{-1} $ that is Dini continuous, namely,
\begin{equation*}
  \int_{0}^{\tau} \frac{\sigma_{2}^{-1}(t)}{t} dt < \infty,
\end{equation*}
for some $ \tau > 0 $.

Note that under \hyperref[A5a]{\bf (A5a)}, equation \eqref{Se1:eq4} is degenerate elliptic wherever $Du=0$, whereas under \hyperref[A5b]{\bf (A5b)} it may be singular there.

The assumption \hyperref[A4]{\bf (A4)} was inspired by the approach introduced in \cite{BD14, BBLL24}. Specifically, we may assume that $ 0 \in \partial \Omega $, and there exists a ball $ B_{1} \subset \mathbb{R}^{n} $ and $ \varphi \in C^{2}(\mathbb{R}^{n-1}) $ such that $ \varphi(0) = 0$, $  \nabla \varphi(0) = 0 $ and
\begin{equation*}
  \Omega \cap B_{1}  \subset \{ y \in B_{1}: y_{n} > \varphi(y')   \},  \quad    \partial \Omega \cap B_{1} =  \{ y \in B_{1}: y_{n} = \varphi(y')   \}.
\end{equation*}

It is well known that $C^{2} $-domains automatically satisfy a uniform interior sphere condition, see \cite[Lemma 2.2]{AKSZ07} for details.

\subsection{Statement of the main results}
We now establish the first main result, concerning the global $ C^{1}$ regularity of viscosity solution to \eqref{Maineq1}.

\begin{theorem}[{Global borderline regularity: degenerate case}]
\label{Thm1}
Let $ u \in C^{0}(\overline{\Omega}) $ be a viscosity solution to \eqref{Maineq1}. Assume that \hyperref[A1]{\bf (A1)}--\hyperref[A5a]{\bf (A5a)} and \hyperref[A6]{\bf (A6)} hold. Then $ u \in C^{1}(\overline{\Omega}) $ and there exists a modulus of continuity $ \omega:  [0,+\infty) \rightarrow [0,+\infty)$ depending only on dimension $ n $, $ \lambda, \Lambda $, $ ||u||_{L^{\infty}(\Omega)}, ||f||_{L^{\infty}(\Omega)}, ||g||_{C^{1,\alpha}(\partial \Omega)} $, $ \sigma_{1} $ and $ \sigma_{2} $, such that
\begin{equation*}
  |Du(x) - Du(y)| \leq \omega(|x-y|)
\end{equation*}
for every $ x,y \in \overline{\Omega} $.
\end{theorem}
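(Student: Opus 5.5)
The plan is to follow the interior strategy of \cite{APPT22, AN25} (geometric tangential analysis plus an iteration whose step sizes are controlled by the Dini integral of $\sigma_2^{-1}$) and combine it with the boundary flattening scheme of \cite{BD14, BBLL24}. Interior $C^1$ estimates with a uniform modulus are already available from \cite{AN25}. It therefore suffices to prove a boundary estimate and then patch the two together.

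\textbf{Reduction and normalization.} Fix $x_0\in\partial\Omega$ and place it at $0$ as in the setup after \hyperref[A4]{\bf (A4)}. We flatten the boundary by $y\mapsto(y',y_n-\varphi(y'))$. Since $\varphi\in C^2$, the transformed operator $\tilde F(x,M)$ is still uniformly elliptic. Its gradient-dependent perturbation is bounded by $C\|D^2\varphi\|_\infty|Du|$, which is controllable. We then subtract a $C^{1,\alpha}$ extension of $g$ (equivalently, the affine part $g(0)+Dg(0)\cdot x$ together with a $C^{1,\alpha}$ error). The resulting problem is posed in $B_1^+$ with boundary datum $\psi$ on $\{x_n=0\}$ satisfying $|\psi(x)|\le C|x|^{1+\alpha}$.

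Next we scale, $u_r(x)=u(rx)/(\kappa r)$, so that $\|u\|_\infty\le1$, $\|f\|_\infty\le\varepsilon$, and the boundary datum is small in $C^{1,\alpha}$. The key structural point from \cite{AN25} is that the equation
\[
\big[\sigma_1(|q+Du|)+a\,\sigma_2(|q+Du|)\big]\tilde F(D^2u)=f
\]
implies $|\tilde F(D^2u)|\le \|f\|_\infty/\sigma_2(\cdot)$ wherever the gradient is not too small. This uses $\sigma_1\ge\sigma_2$.

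\textbf{Approximation and iteration.} The approximation lemma, proved by compactness, states the following. If $\varepsilon$ is small, there is a solution $h$ of $\tilde F_0(D^2h)=0$ in $B_{1/2}^+$ with $h=0$ on the flat boundary and $\|u-h\|_{L^\infty(B^+_{1/2})}\le\delta$. Compactness comes from uniform boundary Hölder estimates, obtained with barriers via the interior sphere condition, together with the cutting-lemma argument of Imbert--Silvestre, which says that $u$ solves $\mathscr P^-\le C\varepsilon$ and $\mathscr P^+\ge-C\varepsilon$ wherever $|Du+q|\ge\sigma_2^{-1}(\varepsilon)$-type thresholds hold, uniformly in the slope $q$. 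Boundary $C^{1,\bar\alpha}$ (indeed $C^{2,\bar\alpha}$ at the flat boundary, by Krylov) of $h$ then gives an affine function $\ell_1=b_1x_n$ with $\sup_{B_\rho^+}|u-\ell_1|\le\rho^{1+\beta}$.

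We iterate on dyadic scales $\rho^k$, obtaining slopes $b_kx_n$. The smallness of the source in the rescaled equations must be tuned at each step, as in \cite{APPT22}. At step $k$ the effective right-hand side is $\rho^{k}\|f\|/\sigma_2(\cdot)$, and we split into two regimes. In the first, $|b_k|$ is large relative to $\sigma_2^{-1}(\rho^k)$; then the equation is uniformly elliptic there and classical boundary $C^{1,\alpha}$ theory \cite{BD14} applies directly. In the second, $|b_k|\lesssim\sigma_2^{-1}(C\rho^k)$, and the increments satisfy $|b_{k+1}-b_k|\le C\sigma_2^{-1}(C\rho^k)$. By \hyperref[A6]{\bf (A6)} this sum converges, since $\sum_k\sigma_2^{-1}(\rho^k)\approx\int_0\sigma_2^{-1}(t)\,dt/t<\infty$. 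This yields a limiting normal derivative at $0$ and a modulus $|u(x)-g\text{-part}-b_\infty x_n|\le|x|\,\omega(|x|)$, where $\omega(r)$ is built from the Dini tails $\int_0^{r^\gamma}\sigma_2^{-1}(t)\,dt/t$ plus $r^{\alpha}$.

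\textbf{Globalization and main obstacle.} We combine the boundary expansion at each boundary point (uniform in the point, by the uniform $C^2$ character of $\partial\Omega$) with the interior modulus of \cite{AN25}, rescaled on balls $B_{d(x)/2}(x)$ in the standard way. This gives $|Du(x)-Du(y)|\le\omega(|x-y|)$ on $\overline\Omega$, comparing $Du(x)$ to the boundary slope at the nearest boundary point. The main obstacle is the iteration step. Unlike the homogeneous case $|Du|^p$, the degeneracy law is not scale-invariant, and the rescaled equations change their $\sigma_i$ at each scale. One must verify that the approximation lemma holds uniformly over the resulting family of moduli and flattened operators, and that the term from $a(x)\sigma_2$, which has varying coefficient $a$, is handled through the lower bound $\sigma_1+a\sigma_2\ge\sigma_2$. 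Only monotonicity of the $\sigma_i$ should be needed, and that is what \hyperref[A5a]{\bf (A5a)} supplies.
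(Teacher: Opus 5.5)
Your overall plan is the paper's plan: normalize, prove a boundary approximation lemma by compactness against $F$-harmonic functions carrying the boundary datum, iterate with Dini summation, and patch with the interior estimate of \cite{AN25}. The paper works in $\{y_n>\varphi_k(y')\}$ with $\varphi_k(y')=r^{-k}\varphi(r^ky')$ instead of flattening, but flattening is acceptable. The genuine gap is the iteration, exactly at the point you call the main obstacle and then leave open.

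What must be tuned is not the source (under the natural scaling it stays $\le\epsilon$, since $f_k(x)=f(\rho^kx)$) but the normalization of the flatness. Suppose after $k$ steps $\sup_{B^+_{\rho^k}}|u-\ell_k|\le\rho^k\eta_k$. Then $v_k(x)=(u-\ell_k)(\rho^kx)/(\rho^k\eta_k)$ solves an equation with degeneracy law
\[
\Sigma_k(t)=\frac{\eta_k}{\rho^k}\big[\sigma_1(\eta_k t)+a(\rho^k x)\,\sigma_2(\eta_k t)\big]\ \ge\ \frac{\Gamma(\eta_k t)}{t\,\rho^k}\quad(\eta_k t\le 1),\qquad \Gamma(t):=t\,\sigma_2(t).
\]
The compactness lemma applies uniformly in $k$ only if $\{\Sigma_k\}$ is non-collapsing, e.g.\ $\Sigma_k(c_k)\ge 1$ for some $c_k\to0$. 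That is a lower bound on the flatness level, roughly $\eta_k\gtrsim\Gamma^{-1}(\rho^k)$, and no condition on the slope $|b_k|$ can enforce it. The paper secures it through the adaptive choice of $\mu_k$ in Section~\ref{Subsection4.1}: Lemma~\ref{Au:prop1} applied to $(\sigma_2^{-1}(\theta^k))_k$, the shored-up property, and Lemma~\ref{Au:lem23}. Writing ``as in \cite{APPT22}'' defers precisely this step.

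The slope dichotomy you substitute does not supply it.
\begin{itemize}
\item In the small-slope regime, the bound $|b_{k+1}-b_k|\le C\sigma_2^{-1}(C\rho^k)$ has no improvement-of-flatness step behind it. That step is the approximation lemma, which needs the non-collapse above.
\item Even if granted, that bound controls slopes only, whereas the modulus must control $\eta_k$. In general $\eta_k$ is of order $\Gamma^{-1}(\rho^k)\gg\sigma_2^{-1}(\rho^k)$. For $\sigma_2(t)=t^p$, the function $u=c|x|^{(2+p)/(1+p)}$ solves $|Du|^p\Delta u=1$ with every slope $b_k=0$, yet its flatness at scale $r$ is $r^{1/(1+p)}$, not $r^{1/p}$.
\item In the large-slope regime, uniform ellipticity near the point requires a gradient bound on $v_k$, i.e.\ a Lipschitz estimate for large $|\xi|$ such as Theorem~\ref{Se4:thm4}. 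H\"{o}lder compactness alone does not give it.
\end{itemize}
To close the argument, either adopt the paper's shored-up sequence, or iterate with
\[
\eta_{k+1}=\rho^{\beta}\max\big\{\eta_k,\ \delta^{-1}\Gamma^{-1}(\delta\rho^k),\ C\rho^{k\alpha}\big\},
\]
normalizing $v_k$ by this maximum so that $\Sigma_k(t)\ge 1$ for $t\ge\delta$. Then note that $\Gamma^{-1}(s)\le\max\{\sqrt s,\sigma_2^{-1}(\sqrt s)\}$, so $\Gamma^{-1}$ inherits the Dini property from $\sigma_2^{-1}$ and $\sum_k\eta_k<\infty$. This gives convergence of the slopes together with an explicit modulus.
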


Next we state the second main result of this paper.

\begin{theorem}
[{Global $ C^{1,\beta}$ regularity: singular case}]
\label{Thm2}
Let $ u \in C^{0}(\overline{\Omega}) $ be a viscosity solution to \eqref{Maineq1}. Assume that \hyperref[A1]{\bf (A1)}--\hyperref[A4]{\bf (A4)} and \hyperref[A5b]{\bf (A5b)} hold. Then $ u \in C^{1,\beta}(\overline{\Omega}) $ for some $ \beta \in (0,1) $.
\end{theorem}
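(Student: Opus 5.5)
The idea is to reduce the singular case to a uniformly elliptic equation with bounded right-hand side, and then apply the known global $C^{1,\beta}$ theory for fully nonlinear uniformly elliptic equations with $C^{1,\alpha}$ boundary data on $C^2$ domains.

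Under \hyperref[A5b]{\bf (A5b)}, each $\sigma_i$ is continuous, monotone decreasing, and has a positive limit at $0$. So on any bounded range of gradients it is bounded above and bounded below by a positive constant. Since also $a\ge 0$, the coefficient
$$\gamma(x,p):=\sigma_1(|p|)+a(x)\sigma_2(|p|)\ge \sigma_1(|p|)$$
is strictly positive wherever $|p|$ stays bounded. Formally, $u$ therefore solves $F(D^2u)=f/\gamma(x,Du)$. I would make this rigorous at the viscosity level. Take a test function $\phi$ touching $u$ from above at $x_0$, and work locally where $|D\phi(x_0)|\le M$. Dividing the viscosity inequality by the positive number $\gamma(x_0,D\phi(x_0))$ gives
$$F(D^2\phi(x_0))\ge -\frac{\|f\|_\infty}{\sigma_1(M)}.$$
Test functions with large gradient need separate handling. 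There is no degeneracy to worry about, but $\sigma_1(|p|)$ may decay as $|p|\to\infty$. For this reason I would first establish a Lipschitz bound.

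\textbf{Step 1: a global Lipschitz estimate.} Since $F(\mathrm{O}_n)=0$ and $\sigma_1(1)\ge1$, I would build barriers near each boundary point using the uniform interior/exterior sphere condition of the $C^2$ domain. The barriers are radial functions of the form $g$-extension $\pm\bigl(C(d(x)) - Cd(x)^2\bigr)$, or standard exterior-ball radial barriers with large gradient. These control $|u(x)-g(x_0)-\ldots|\le C|x-x_0|$ from the boundary. For this, note that a supersolution $\psi$ of $\mathscr P^+(D^2\psi)\le -K$ is also a supersolution of the full equation as long as $\gamma(x,D\psi)K\ge\|f\|_\infty$; choosing the gradient of $\psi$ in $[1,M]$ uses only $\sigma_1\ge\sigma_1(M)>0$ there. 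In the interior I would apply the Ishii--Lions doubling method with $\omega(r)=r-\kappa r^{3/2}$. At points where the test gradient is large, the equation reads $F\ge -\|f\|_\infty/\gamma$, and the $\gamma$ term is harmless since we only need the correct sign up to bounded errors once $\gamma\ge \sigma_1(L)$. Here $L$ is the target Lipschitz constant, and I would pick $L$ large via a truncation or continuity argument. This step is the main obstacle: the decreasing $\sigma_i$ could make $\gamma$ small for large gradients, so a priori the right-hand side $f/\gamma$ is not bounded. The trick I would try first is to avoid needing $\gamma$ at large gradients. Alternatively, if the paper implicitly assumes $\sigma_i$ bounded below on $[0,\infty)$, for instance through $\sigma_1(1)\ge1$ and monotone behavior, then this issue disappears.

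\textbf{Step 2: reduction and conclusion.} Once $\|Du\|_{L^\infty}\le L$, every admissible test function at a touching point has $|D\phi|\le L$ in the relevant sense; for semiconcave test functions touching a Lipschitz function this holds. Then $u$ is a viscosity solution of
$$\mathscr P^-(D^2u)\le \frac{\|f\|_\infty}{\sigma_1(L)},\qquad \mathscr P^+(D^2u)\ge -\frac{\|f\|_\infty}{\sigma_1(L)}.$$
More precisely, $u$ solves $F(D^2u)=\tilde f$ with $\tilde f(x)=f(x)/\gamma(x,Du(x))$, which is bounded. I would phrase this through the class $S(\lambda,\Lambda,\tilde f)$, with $F$ frozen via the standard argument. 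The global $C^{1,\beta}$ regularity for such equations on $C^2$ domains with $g\in C^{1,\alpha}(\partial\Omega)$ follows from Caffarelli's interior theory together with the boundary $C^{1,\beta}$ estimates of Silvestre--Sirakov or Birindelli--Demengel \cite{BD14}, carried out after flattening by $\varphi$. This gives $u\in C^{1,\beta}(\overline\Omega)$ with $\beta\le\alpha$ depending on $n,\lambda,\Lambda$.
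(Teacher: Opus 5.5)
Your outline follows the paper's own proof. The paper takes the Lipschitz bound from Remark~\ref{Section3:rmk2}, rewrites the equation as $F(D^2u)=f/(\sigma_1(|Du|)+a\sigma_2(|Du|))$, and concludes with \cite[Theorem 2.1]{AS23} near $\partial\Omega$ and \cite{BBO23} in the interior. Your Step 2 is fine, and more careful than the paper's. Any $C^2$ function touching an $L$-Lipschitz $u$ at $x_0$ has $|D\phi(x_0)|\le L$, so $-C\le F(D^2u)\le C$ holds in the viscosity sense with $C=\|f\|_\infty/\sigma_1(L)$. Since $\sigma_1$ is decreasing, the controlling constant is $\sigma_1(L)$, not the lower bound for $\sigma_1$ near $t=0$ that the paper invokes. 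Keep this two-sided inequality for $F$ itself, though: membership in the Pucci class $S$ alone only yields $C^{0,\beta}$.

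The genuine gap is Step 1. You rightly flag it, but neither of your suggested escapes exists. Under (A5b), $\sigma_1$ is decreasing, so $\sigma_1(1)\ge 1$ bounds it from below only on $[0,1]$; it may tend to $0$ arbitrarily fast as $t\to\infty$. Both of your arguments then become circular:
the barrier needs $K\sigma_1(M)\ge\|f\|_\infty$, while its slope $M$ must grow with $K$;
in the doubling step, the error $\|f\|_\infty/\sigma_1(2L_1)$ must be absorbed by a term of order $\lambda\omega_0L_1|\bar x-\bar y|^{-1/2}$.

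No device can repair this, because the Lipschitz bound, and Theorem~\ref{Thm2} itself, fail in this generality. Take $F=\mathrm{tr}$, $a\equiv 0$ and $\sigma_1=\sigma_2=e^{1-t}$. These are decreasing with $\sigma_i(0)=e$ and $\sigma_i(1)=1$, so (A5b) holds. Let $\Omega=B_{1/4}(\tfrac14 e_n)\subset\{0<x_n<\tfrac12\}$ and $u(x)=x_n-x_n\ln x_n$. Then $|Du|=-\ln x_n$ and $\Delta u=-1/x_n$, so $\sigma_1(|Du|)\Delta u=e\,x_n\cdot(-1/x_n)=-e$ classically in $\Omega$, with $f\equiv -e$ and $u\in C^0(\overline{\Omega})$. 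Near $0$ we have $\partial\Omega=\{x_n=\psi(x')\}$ with $2|x'|^2\le\psi\le 4|x'|^2$. The function $G(x')=u(x',\psi(x'))$ satisfies $DG=-\ln\psi\,D\psi$ and $|D^2G|=O(1+\ln(1/|x'|))$. Hence $g=u|_{\partial\Omega}\in C^{1,\alpha}(\partial\Omega)$ for every $\alpha<1$. Yet $\partial_n u=-\ln x_n\to+\infty$ at $0$, so $u$ is not even Lipschitz up to the boundary.

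The paper's proof has the same hole. Remark~\ref{Section3:rmk2} asserts the Lipschitz bound ``for both the degenerate and singular case,'' but the computation it relies on (Theorem~\ref{Se4:thm3}) uses $\sigma_1(|q|)\ge\sigma_1(1)$ for $|q|\ge 1$, i.e.\ that $\sigma_1$ is increasing. What is missing is a growth condition at infinity, for instance $\sigma_1(t)\ge c\,t^{-1}$ for $t\ge 1$; the paper's examples $t^{p}$ with $-1<p<0$ satisfy it. Under such a condition $|F(D^2u)|\lesssim 1+|Du|$, the barrier and Ishii--Lions arguments close, and your Step 2 finishes the proof.
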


\begin{remark}
We make some remarks on the above theorems.

\begin{enumerate}

\item In Theorem \ref{Thm1}, with the stronger assumption that $ \sigma_{2}^{-1} $ behaves as a H\"{o}lder continuous function near the origin, the corresponding solutions admit global $ C^{1,\alpha_{1}}$ regularity, for some $ \alpha_{1} \in (0,1) $. To a certain extent, Theorem \ref{Thm1}  generalizes the results established in \cite{BD14}, and can be regarded as the boundary counterpart of the interior $ C^{1,\alpha'}$ regularity obtained in \cite{IS13} and \cite{De21}.

\item Theorem \ref{Thm1} can also be extended to multi-phase degenerate fully nonlinear equations, namely,
  \begin{equation*}
\left\{
     \begin{alignedat}{2}
        \bigg[ \sigma_{1}(|Du|) + \sum_{i=2}^{k} a_{i}(x)\sigma_{i}(|Du|)                   \bigg] F(D^{2}u)&= f  \quad  &&  \text{in}  \ \  \Omega     ,    \\
          u & = g     \quad  &&  \text{on}   \ \  \partial \Omega.         \\
     \end{alignedat}
     \right.
\end{equation*}
As long as we consider the following decay:
\begin{equation*}
  \sigma_{1}(t) \geq \sigma_{2}(t) \geq \cdots \geq \sigma_{k}(t), \ \forall \  t\in [0,1], \ \      \sigma_{1}(1) \geq \sigma_{2}(1) \geq \cdots \geq \sigma_{k}(1) \geq 1,
\end{equation*}
\begin{equation*}
  \lim_{t\rightarrow 0} \sigma_{i}(t) = 0, \quad  i= 1,2,\cdots, k, \quad  0 \leq a_{j}(x) \in C^{0}(B_{1}), \  j=2,\cdots, k,
\end{equation*}
and $ \sigma_{k}^{-1} $ satisfies Dini condition, and $ g \in C^{1,\alpha}(\partial \Omega) $ for some $ \alpha \in (0,1) $. Notably, the monotonicity of $ \{\sigma_{k}\}_{k}$ readily implies that $ \{\sigma_{j}^{-1}\}_{j=1}^{k-1} $ is also Dini continuous.

\vspace{2mm}

\item Theorems \ref{Thm1} and \ref{Thm2} also admit an extension to the setting of degenerate or singular fully nonlinear equations with Hamiltonian terms
\begin{equation*}
\left\{
     \begin{alignedat}{2}
       \big[\sigma_{1}(|Du|)+a(x)\sigma_{2}(|Du|)\big] F(D^{2}u) + H(x,Du) & = f        \quad  &&  \text{in}  \ \ \Omega     ,    \\
          u &  = g     \quad  &&  \text{on}   \ \  \partial \Omega ,        \\
     \end{alignedat}
     \right.
\end{equation*}
where $ H: \Omega \times \mathbb{R}^{n} \rightarrow \mathbb{R} $ is continuous and there is a constant $ \mathcal{M} $ such that
\begin{equation*}
  |H(x,\xi)| \leq \mathcal{M}(1+\sigma_{2}(|\xi|))
\end{equation*}
for every $ x \in \Omega $ and $ \xi \in \mathbb{R}^{n} $.

\vspace{2mm}

\item  As discussed above, the assumption \hyperref[A5b]{\bf (A5b)} in Theorem \ref{Thm2} encompasses singular fully nonlinear equations. Illustrative examples may be obtained by choosing, for example,
\begin{equation*}
  \sigma_{1}(t) = t^{p}, \quad  \sigma_{2}(t) = t^{q}, \quad  -1 < p \leq q <0,
\end{equation*}
or
\begin{equation*}
  \sigma_{1}(t)= \frac{\ln^{\beta}(1+t)+1}{t}, \ \ \sigma_{2}(t)= \frac{\ln^{\beta}(1+t)}{t},\ \ 0 < \beta \leq 1 .
\end{equation*}
For more examples, we refer the reader to \cite{WJ26, BBO23}.
\end{enumerate}
\end{remark}

\subsection{The ideas of proof of Theorems \ref{Thm1} and \ref{Thm2}}
We first note that establishing the boundary $ C^{1} $ regularity of the viscosity solution $ u$ to \eqref{Maineq1} is equivalent to showing that the graph of $ u $ can be approximated by an affine function with an error bounded by $ Cr $ on every $ {B_{r}\cap \{y_{n}>\varphi(y')\}} $. To this end, we first characterize the boundary behavior of solutions to \eqref{Maineq1} via the $ C^{2}$-distance function. Combining with the Ishii-Lions method, this argument yields the boundary $ C^{0,\gamma} $ and $ C^{0,1} $ regularity of viscosity solutions to the perturbed equation
\begin{equation}
\label{intro:eq15}
\left\{
     \begin{alignedat}{2}
       \big[\sigma_{1}(|Du+\xi|)+a(y)\sigma_{2}(|Du+\xi|)\big] F(D^{2}u) & = f        \quad  &&  \text{in} \ \ B_{1} \cap \{y_{n} > \varphi(y')\}    ,    \\
          u(y)  & = g(y)    \quad  &&   \text{on}   \ \  B_{1}  \cap \{y_{n} = \varphi(y')\},        \\
     \end{alignedat}
     \right.
\end{equation}
where $ \xi \in \mathbb{R}^{n} $ is an arbitrary vector. We then derive a boundary approximation lemma by comparing solutions of \eqref{Maineq1} and \eqref{intro:eq15} with those of the homogeneous uniformly elliptic problem $ F(D^{2}u) = 0 $ defined on $ {B_{r}\cap \{y_{n}>\varphi(y')\}} $, for all $ r \in (0,1) $. The main technical difficulty lies in the fact that the degenerate coefficient $ \sigma_{1} + a(\cdot)\sigma_{2} $  fails to satisfy the {\it non-collapsing} property. To overcome this obstacle, we adopt the strategy from \cite{AN25} and construct a new sequence of reinforced continuous moduli $ \{\sigma_{1}^{j}(t)+a_{j}(\cdot)\sigma_{2}^{j}(t)\}_{j \in \mathbb{N}} $. Finally, a refined iteration and convergence analysis yeilds the desired boundary $ C^{1} $ regularity.

To prove Theorem \ref{Thm2}, we adopt the Ishii-Lions method to establish the boundary $C^{0,1}$ regularity of solutions to the singular problem \eqref{intro:eq15} with $\xi = 0$. This reduction simplifies the original problem to a classical fully nonlinear Dirichlet problem. We then apply \cite[Theorem 2.1]{AS23} to obtain the boundary $C^{1,\beta}$ regularity of solutions to \eqref{Maineq1}.

\vspace{2mm}

This paper is organised as follows. In Section \ref{Section ex1}, we present an example, which shows that the boundary condition $ g \in C^{1,\alpha} $ ($ \alpha > 0 $) is needed for the boundary $ C^{1} $ regularity. In Section \ref{Section 2}, we present the definition of viscosity solution to \eqref{Maineq1} and several auxiliary lemmas. In Section \ref{Section 3}, we deliver the boundary Lipschitz regularity of viscosity solution to perturbation equation \eqref{Section3:eq1} by using scaling technique, barrier function, comparison principle and Ishii-Lions method, see Theorem \ref{Se4:thm4}. Finally, Section \ref{Section 4} provides the proofs of Theorems \ref{Thm1} and \ref{Thm2}.

\section{A counterexample}\label{Section ex1}
In this section, we show an example to show that $C^{1}$ boundary data alone do not ensure global $C^{1}$ regularity of the solution, even for a particularly simple equation in the class considered in this paper. In the example, we shall take $a\equiv 0$ and $F(M)=\operatorname{tr}(M)$ in \eqref{Maineq1}.

\begin{example}[$C^{1}$ boundary data with an unbounded boundary gradient]
\label{Se2:counterexample}
Let $\Omega=B_1(0)\subset\mathbb{R}^{2}$ and set
\begin{equation}\label{Se2:sigma}
  \sigma(t):=\frac{2t}{1+t}, \qquad t\geq 0.
\end{equation}
For $(x,y)=(r\cos\theta,r\sin\theta)$, define
\begin{equation}\label{Se2:u-definition}
  u(r,\theta):=u_h(r,\theta)+u_p(r),
\end{equation}
where
\begin{equation}\label{Se2:uh-up}
  u_h(r,\theta)
  :=\sum_{n=2}^{\infty}\frac{r^n}{n^2\ln n}\cos(n\theta),
  \qquad
  u_p(r):=\frac14(1-r^2).
\end{equation}
On $\partial B_1(0)$, prescribe
\begin{equation}\label{Se2:boundary-datum}
  g_h(\cos\theta,\sin\theta)
  :=\sum_{n=2}^{\infty}\frac{\cos(n\theta)}{n^2\ln n}.
\end{equation}
Then $g_h\in C^1(\partial B_1(0))$, and $u\in C^0(\overline{B_1(0)})\cap C^\infty(B_1(0))$ is a classical, and hence viscosity, solution of the Dirichlet problem
\begin{equation}\label{Se2:counterexample-problem}
  \left\{
  \begin{aligned}
    \sigma(|Du|)\Delta u
      &=f(x,y):=-\frac{2|Du(x,y)|}{1+|Du(x,y)|}
        &&\text{in }B_1(0),\\
    u&=g_h &&\text{on }\partial B_1(0).
  \end{aligned}
  \right.
\end{equation}
Moreover, $f\in C^0(B_1(0))\cap L^\infty(B_1(0))$, while
\begin{equation}\label{Se2:gradient-blowup}
  \lim_{r\rightarrow 1^-}\partial_r u(r,0)=+\infty.
\end{equation}
Therefore, $u\notin C^1(\overline{B_1(0)})$.
\end{example}

\begin{proof}
We verify the assertions in the order in which they appear in the example. First, $\sigma$ is continuous and increasing, $\sigma(0)=0$, and $\sigma(1)=1$. Its inverse on $[0,2)$ is $\sigma^{-1}(s)=\frac{s}{2-s}$. Consequently,
\[
  \int_0^1\frac{\sigma^{-1}(s)}{s}\,ds
  =\int_0^1\frac{1}{2-s}\,ds
  =\ln 2<\infty,
\]
so $\sigma^{-1}$ satisfies the required Dini condition near the origin.

For every $n\geq 2$, the function $r^n\cos(n\theta)$ is harmonic in $B_1(0)$. Since $\sum_{n=2}^{\infty}(n^2\ln n)^{-1}<\infty$, the series defining $u_h$ converges absolutely and uniformly on $\overline{B_1(0)}$; in particular, $u_h\in C^0(\overline{B_1(0)})$. Its convergence is locally uniform in $B_1(0)$, so $u_h$ is harmonic there. Also,
\[
  \Delta u_p=-1 \quad\text{in }B_1(0),
  \qquad
  u_p=0 \quad\text{on }\partial B_1(0).
\]
It follows that $\Delta u=-1$ in $B_1(0)$ and that $u=g_h$ on $\partial B_1(0)$. \eqref{Se2:counterexample-problem} now follows immediately from \eqref{Se2:sigma} together with the above identities. Since $u$ is smooth in $B_1(0)$, the function $f=-\sigma(|Du|)$ is continuous there; moreover, $|f|\leq 2$. Then, $f\in C^0(B_1(0))\cap L^\infty(B_1(0))$.

It remains to check the regularity of $g_h$ and the failure of boundary gradient control. The series in \eqref{Se2:boundary-datum} converges absolutely and uniformly. Formally differentiating it gives
\begin{equation}\label{Se2:gh-derivative}
  \frac{d}{d\theta}g_h(\cos\theta,\sin\theta)
  =-\sum_{n=2}^{\infty}\frac{\sin(n\theta)}{n\ln n}.
\end{equation}
The coefficients $a_n=(n\ln n)^{-1}$ decrease to zero and satisfy $n a_n\to 0$. The classical uniform-convergence criterion for sine
series therefore shows that the series on the right-hand side of \eqref{Se2:gh-derivative} converges uniformly; see
\cite[Vol.~I, Chapter~V, Theorem~1.3]{Z02}. Termwise differentiation is thus justified, and $g_h\in C^1(\partial B_1(0))$.

Finally, along the radius $\theta=0$,
\[
  \partial_r u(r,0)
  =\sum_{n=2}^{\infty}\frac{r^{n-1}}{n\ln n}-\frac r2.
\]
All terms in the series are nonnegative. Hence, by monotone convergence,
\[
  \lim_{r\rightarrow 1^-}
  \sum_{n=2}^{\infty}\frac{r^{n-1}}{n\ln n}
  =\sum_{n=2}^{\infty}\frac{1}{n\ln n}=+\infty,
\]
which proves \eqref{Se2:gradient-blowup}; see Figure \ref{fig:double}.
\end{proof}

\begin{figure}[htbp]
  \centering
  \begin{subfigure}[b]{0.48\textwidth}
    \centering
    \includegraphics[width=\linewidth]{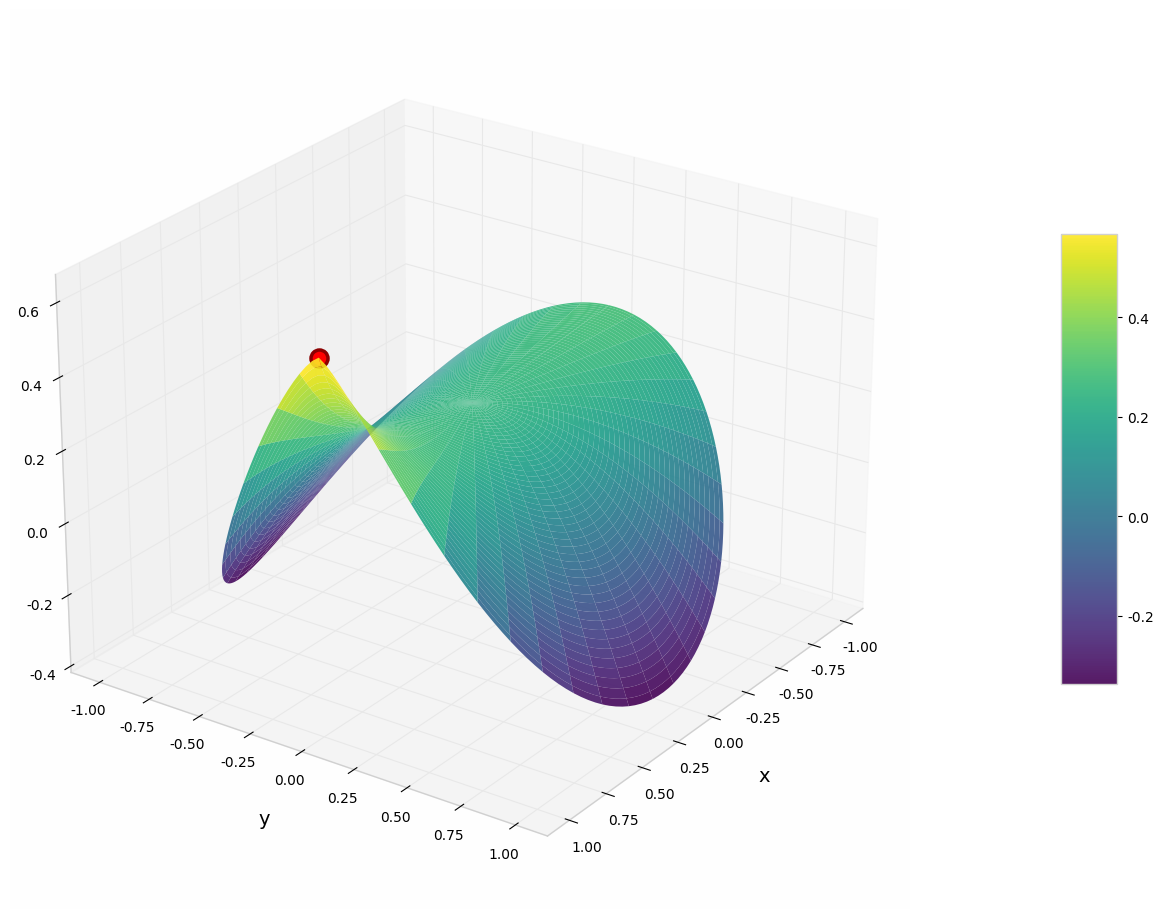}
    \caption{The graph of $u$ over $B_1$. The red point indicates the boundary point $(1,0)$ approached along $\theta=0$.}
    \label{image1}
  \end{subfigure}
  \hfill
  \begin{subfigure}[b]{0.48\textwidth}
    \centering
    \includegraphics[width=\linewidth]{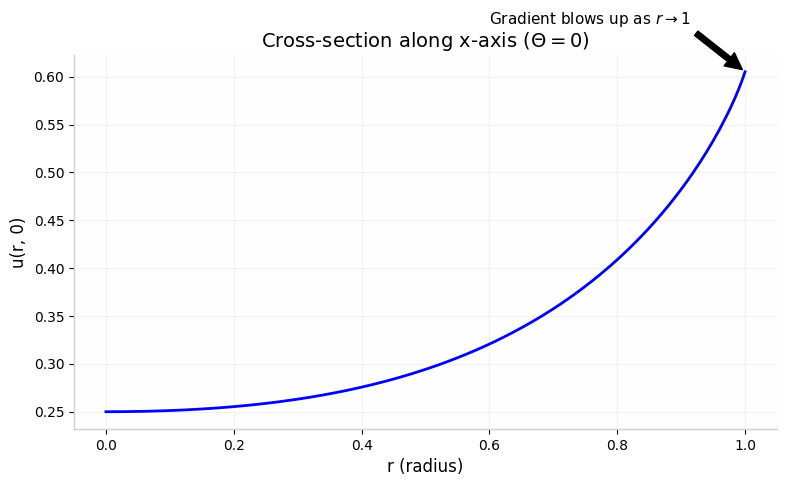}
    \caption{The radial profile $r\mapsto u(r,0)$. Its slope becomes unbounded as $r\rightarrow 1^-$.}
    \label{image2}
  \end{subfigure}
  \caption{The solution remains continuous up to the boundary, whereas its radial derivative blows up at $(1,0)$.}
  \label{fig:double}
\end{figure}

\begin{remark}
The harmonic part $u_h$ of the construction in Example \ref{Se2:counterexample} is adapted from a classical example in the theory of conjugate trigonometric series; see Chaundy and Jolliffe \cite{CJ17}, Zygmund \cite{Z02}, and Gissy-Miihkinen-Virtanen \cite{GMV21}. We add the particular solution \(u_p(r)=\frac14(1-r^2)\) and choose the degeneracy law \(\sigma(t)=2t/(1+t)\) in order to embed this classical harmonic example into the class of degenerate equations \eqref{Se2:counterexample-problem} considered here.
\end{remark}

\begin{remark}[Interpretation of the example]
The construction in Example \ref{Se2:counterexample} proves that the assumption $g\in C^{1,\alpha}(\partial\Omega)$ in Theorem~\ref{Thm1} cannot in
general be weakened to $g\in C^1(\partial\Omega)$. Thus, within the H\"older scale, the requirement of a positive exponent is necessary. The example does not settle the intermediate case $g\in C^{1,\mathrm{Dini}}(\partial\Omega)$; whether such data suffice for global $C^1$ regularity remains an interesting question.

From the construction of Example \ref{Se2:counterexample}, we know that the function $u(r, \theta)$ defined in \eqref{Se2:u-definition}--\eqref{Se2:boundary-datum} is also a solution of the problem
\begin{equation}\label{Se2:counterexample-problem-2}
  \left\{
  \begin{aligned}
    \Delta u
      &=-1
        &&\text{in }B_1(0),\\
    u&=g_h\in C^{1}(\partial B_1(0)) &&\text{on }\partial B_1(0).
  \end{aligned}
  \right.
\end{equation}
This implies that, even for nondegenerate uniformly elliptic equations, merely $C^1$ boundary data can not ensure global $C^1$ regularity of the solution.
\end{remark}

\section{Preliminaries}
\label{Section 2}
In this section, we first present the definitions of viscosity solution, non-collapsing set and shored-up sequence. Then we give some useful auxiliary lemmas in the proof of Theorem \ref{Thm1}.

\subsection{Some definitions}

\begin{definition}\label{subsection 2.1}
A function $ u \in C^{0}(\Omega) $ is a viscosity super-solution (resp. sub-solution) to \eqref{Maineq1} if whenever $ \varphi \in C^{2}(\Omega) $ and $ x_{0} \in \Omega $ such that $ u-\varphi $ has a local minimum (resp. a local maximum) at $ x_{0} $, then
\begin{equation*}
  \big[\sigma_{1}(|D\varphi(x_{0})|)+a(x_{0})\sigma_{2}(|D\varphi(x_{0})|)\big] F(D^{2}\varphi(x_{0})) \leq  f(x_{0})  \quad (\text{resp}. \cdots \geq f(x_{0})).
\end{equation*}
Finally, we say that $ u $ is a viscosity solution to \eqref{Maineq1} if it is both a viscosity sub-solution and a viscosity super-solution.
\end{definition}

\begin{definition}
Let $ \boldsymbol{\mathscr{Q}} $ be a collection of moduli of continuity defined over an interval $ I \in \mathcal{I} $, where $ \mathcal{I}:= \{(0,T]: 0< T < \infty \} \cup \{\mathbb{R}_{0}^{+}\}  $. We call that $ \boldsymbol{\mathscr{Q}} $ is a non-collapsing set if for all sequences $ (f_{k})_{k \in \mathbb{N}} \subset \boldsymbol{\mathscr{Q}} $, and all sequences $ (f_{k})_{k \in \mathbb{N}} \subset I $, then
\begin{equation*}
 \lim_{k\rightarrow +\infty} f_{k}(a_{k}) = 0 \ \  \Rightarrow \ \
 \lim_{k\rightarrow +\infty} a_{k} = 0.
\end{equation*}
\end{definition}

To construct a family of non-collapsing moduli of continuity $\boldsymbol{\mathscr{Q}} $ in Section \ref{Subsection4.1}, and   similarly to \cite[Section 7]{APPT22} and \cite[Section 5.1]{AN25}, we also introduce the notation of {\it shored-up}.

\begin{definition}
A sequence of moduli of continuity $(\sigma_{1}^{k})_{k \in \mathbb{N}} $ is said to be shored-up if there exists a sequence of positive numbers $ (c_{k})_{k \in \mathbb{N}} $ such that $ c_{k} \rightarrow 0 $ satisfying $ \inf_{k} \sigma_{1}^{k}(c_{k}) > 0 $ for every $ k \in \mathbb{N} $.
\end{definition}

\subsection{Auxiliary lemmas}
In this subsection, we begin by presenting several key results that will be used later.

In what follows, we establish a technical lemma ensuring the existence of {\it shored up}-continuous module sequence $ (\sigma_{1}^{k})_{k\in \mathbb{N}} $, see \cite[Lemma 1]{APPT22}.

\begin{lemma}\label{Au:prop1}
Let $ (a_{j})_{j\in\mathbb{N}} \in \ell^{1}  $ and take arbitrary $ \epsilon, \delta >0 $. There exists a sequence $ (c_{j})_{j\in\mathbb{N}} \in c_{0}  $, with $ \max_{j\in\mathbb{N}}|c_{j}| \leq \epsilon^{-1}     $, such that
\begin{equation*}
\bigg( \frac{a_{j}}{c_{j}}    \bigg)_{j\in\mathbb{N}} \in \ell^{1},
\end{equation*}
and
\begin{equation*}
\epsilon \bigg( 1-\frac{\delta}{2}    \bigg)||(a_{j})||_{\ell^{1}} \leq \bigg|\bigg|\bigg( \frac{a_{j}}{c_{j}}    \bigg)\bigg|\bigg|_{\ell^{1}} \leq \epsilon(1+\delta)||(a_{j})||_{\ell^{1}}.
\end{equation*}
\end{lemma}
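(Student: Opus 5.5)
The plan is to take $c_j$ equal to $\varepsilon^{-1}$ on a long initial block and to let it decay to zero afterwards at the rate of the square root of the tail of $(a_j)$. The classical fact that $\sum_j |a_j|/\sqrt{r_j}<\infty$, where $r_j$ is the $j$-th tail, then controls the extra contribution.

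\textbf{Lower bound.} It comes for free. Since we will have $0<c_j\le \varepsilon^{-1}$ for all $j$, each term satisfies $|a_j/c_j|\ge \varepsilon |a_j|$. Summing gives
\[
\|(a_j/c_j)\|_{\ell^1}\ge \varepsilon\|(a_j)\|_{\ell^1}\ge \varepsilon(1-\tfrac{\delta}{2})\|(a_j)\|_{\ell^1}.
\]
So the whole task is the upper bound together with $c_j\to 0$. If $(a_j)\equiv 0$, any $c_j=\varepsilon^{-1}/j$ works, so assume $A:=\|(a_j)\|_{\ell^1}>0$.

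\textbf{Construction.} Set $r_j:=\sum_{i\ge j}|a_i|$. This sequence is nonincreasing and tends to $0$. Fix $N$ with $r_N\le \delta A/2$.
\begin{itemize}
\item For $j<N$, put $c_j:=\varepsilon^{-1}$.
\item For $j\ge N$ with $r_j>0$, put $c_j:=\varepsilon^{-1}\sqrt{r_j/r_N}$. This is $\le\varepsilon^{-1}$ because $r_j\le r_N$, and it tends to $0$ if $r_j>0$ for all $j$.
\item If $r_j=0$ from some index $j_0$ on, then $a_j=0$ for $j\ge j_0$. There put $c_j:=\varepsilon^{-1}/j$; these terms contribute nothing to the sum.
\end{itemize}
In all cases $(c_j)\in c_0$ and $\max_j|c_j|\le\varepsilon^{-1}$.

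\textbf{Upper bound.} Split the sum at $N$. The first block contributes $\varepsilon\sum_{j<N}|a_j|\le \varepsilon A$. For the tail, use $|a_j|=r_j-r_{j+1}$ and the elementary inequality
\[
\frac{r_j-r_{j+1}}{\sqrt{r_j}}=(\sqrt{r_j}+\sqrt{r_{j+1}})\frac{\sqrt{r_j}-\sqrt{r_{j+1}}}{\sqrt{r_j}}\le 2(\sqrt{r_j}-\sqrt{r_{j+1}}).
\]
Summing over $j\ge N$, the right-hand side telescopes, so
\[
\sum_{j\ge N}\frac{|a_j|}{c_j}=\varepsilon\sqrt{r_N}\sum_{j\ge N,\,r_j>0}\frac{r_j-r_{j+1}}{\sqrt{r_j}}\le 2\varepsilon r_N\le \varepsilon\delta A.
\]
Adding the two parts gives $\|(a_j/c_j)\|_{\ell^1}\le\varepsilon(1+\delta)A$. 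In particular $(a_j/c_j)\in\ell^1$, which completes the proof.

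\textbf{Main obstacle.} The only genuinely nontrivial step is the telescoping estimate for $\sum |a_j|/\sqrt{r_j}$. It is what lets $c_j\to0$ while keeping the weighted sum finite and close to $\varepsilon A$. The rest is bookkeeping, including the degenerate case where the tails eventually vanish.
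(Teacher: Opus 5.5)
Your proof is correct and complete. The paper gives no proof of this lemma: it is quoted from \cite[Lemma 1]{APPT22}. So there is no in-paper argument to match, and your construction is a self-contained substitute for the citation.

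Factorization statements of this type ($\ell^1=c_0\cdot\ell^1$) are typically proved by a block construction. One chooses indices $N_1<N_2<\cdots$ with geometrically small tails and lets $c_j$ drop by a fixed factor from one block to the next, which requires an inductive choice of indices. Your choice $c_j=\varepsilon^{-1}\sqrt{r_j/r_N}$ is instead a single closed formula. The whole upper estimate then reduces to the Abel--Dini telescoping inequality $(r_j-r_{j+1})/\sqrt{r_j}\le 2(\sqrt{r_j}-\sqrt{r_{j+1}})$.

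Your argument gives slightly more than the statement asks for:
\begin{itemize}
\item The first block contributes exactly $\varepsilon(A-r_N)$, so in fact $\|(a_j/c_j)\|_{\ell^1}\le\varepsilon(A+r_N)\le\varepsilon(1+\delta/2)A$.
\item Your observation that $0<c_j\le\varepsilon^{-1}$ alone forces $\|(a_j/c_j)\|_{\ell^1}\ge\varepsilon A$ shows that the factor $1-\delta/2$ in the lower bound is superfluous.
\end{itemize}

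The only bookkeeping wrinkle is that your first and third cases overlap on indices $j_0\le j<N$ when $j_0<N$. This is harmless: $a_j=0$ there, both candidate values lie in $(0,\varepsilon^{-1}]$, and $c_j\to0$ depends only on the eventual values.
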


Before proceeding, we shall provide a criterion of non-collapsing moduli of continuity $ \boldsymbol{\mathscr{Q}} $, which will be used directly in Section \ref{Subsection4.1}, see \cite[Proposition 5]{APPT22}.

\begin{lemma}\label{Au:lem23}
  If a sequence of moduli of continuity $(\sigma_{1}^{k})_{k\in \mathbb{N}} $ is shored-up, then $ \Gamma := \cup_{k \in \mathbb{N}} \{\sigma_{1}^{k} \} $ is non-collapsing.
\end{lemma}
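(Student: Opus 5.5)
The plan is a direct contradiction argument. It uses only monotonicity of moduli of continuity and the fact that $c_{k}\to 0$. Throughout, a modulus of continuity $\sigma$ is nondecreasing, with $\sigma(t)\to 0$ as $t\to 0$ and $\sigma(t)>0$ for $t>0$; this is the convention of \cite{APPT22}, and the last property is needed for the finitely many exceptional indices below.

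Let $\mu:=\inf_{k}\sigma_{1}^{k}(c_{k})>0$ be the constant from the shored-up property. Take a sequence $(f_{k})\subset\Gamma$ and points $(a_{k})\subset I$ with $f_{k}(a_{k})\to 0$. Write $f_{k}=\sigma_{1}^{m_{k}}$ for some indices $m_{k}\in\mathbb{N}$; repetitions are allowed, and the $m_{k}$ need not tend to infinity. Suppose, for contradiction, that $a_{k}\not\to 0$. Then, after passing to a subsequence, there is $\eta>0$ with $a_{k}\geq\eta$ for all $k$. By monotonicity,
\begin{equation*}
f_{k}(a_{k})=\sigma_{1}^{m_{k}}(a_{k})\geq\sigma_{1}^{m_{k}}(\eta)\quad\text{for all }k.
\end{equation*}

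The key step is a lower bound on $\sigma_{1}^{m}(\eta)$ that is uniform in $m\in\mathbb{N}$. Since $c_{m}\to 0$, the set $J:=\{m: c_{m}>\eta\}$ is finite. We treat the two kinds of indices separately.
\begin{itemize}
\item For $m\notin J$ we have $c_{m}\leq\eta$, so $\sigma_{1}^{m}(\eta)\geq\sigma_{1}^{m}(c_{m})\geq\mu$.
\item For $m\in J$ there are only finitely many moduli, each positive at $\eta>0$, so $\nu:=\min_{m\in J}\sigma_{1}^{m}(\eta)>0$, with the convention $\nu=+\infty$ if $J=\emptyset$.
\end{itemize}
Hence $\sigma_{1}^{m}(\eta)\geq\min\{\mu,\nu\}>0$ for every $m$. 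This gives $f_{k}(a_{k})\geq\min\{\mu,\nu\}>0$ along the subsequence, contradicting $f_{k}(a_{k})\to 0$. Therefore $a_{k}\to 0$, and $\Gamma$ is non-collapsing.

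The only delicate point is the finite exceptional set $J$. The shored-up condition gives no control of $\sigma_{1}^{m}$ at scales below $c_{m}$, so for those finitely many indices we must fall back on the positivity of each individual modulus away from $0$. One should also check that the domain $I$, whether $(0,T]$ or $\mathbb{R}_{0}^{+}$, contains $\eta$ and the relevant $c_{m}$. This holds after shrinking $\eta$ if necessary, because $a_{k}\in I$ and $a_{k}\geq\eta$.
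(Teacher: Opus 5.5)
Your proof is correct. The paper gives no argument of its own and simply cites \cite[Proposition 5]{APPT22}; your reasoning is essentially the standard argument behind that result: extract a subsequence with $a_k\geq\eta>0$, use $c_m\to 0$ and monotonicity to get $\sigma_1^m(\eta)\geq\sigma_1^m(c_m)\geq\inf_k\sigma_1^k(c_k)$ for all but finitely many $m$, and handle the finitely many remaining moduli by their positivity at $\eta$. You are also right that $\sigma(t)>0$ for $t>0$ must be built into the notion of modulus of continuity, since the shored-up condition gives no control over the finitely many $\sigma_1^m$ with $c_m>\eta$.
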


\begin{remark}\label{Rk26}
Dini condition can also be characterized in terms of the summability of $ \sigma_{2}^{-1} $ evaluated along geometric sequences. Namely, $ \sigma_{2}^{-1} $ satisfies the Dini condition if and only if
\begin{equation*}
\sum_{k=1}^{\infty} \sigma_{2}^{-1}(\theta^{k}) < \infty ,
\end{equation*}
for every $ \theta \in (0,1) $. This elementary quality is frequently used in the proof of Theorem \ref{Thm1}.
\end{remark}

The following lemma plays a crucial role in the proof of Theorem \ref{Thm1}. To prove this, one can follow the lines of proof of \cite[Proposition 6]{APPT22}.

\begin{lemma}[{Stability}]
\label{Se2:lemma5}
Let $ \{g_{j}\}_{j \in \mathbb{N}} $ be a sequence of Lipschitz continuous functions such that $ g_{j} \rightarrow g_{\infty} $. Assume that $\boldsymbol{\mathscr{Q}}$ is a collection of non-collapsing moduli of continuity, and $ u \in C^{0}(B_{1} \cap \{y_{n}>\varphi(y')\})$ is a bounded viscosity solution to
\begin{equation*}
\left\{
     \begin{alignedat}{2}
       \big[\sigma_{1,j}(|Du_{j}+\xi_{j}|)+a_{j}(y)\sigma_{2,j}(|Du_{j}+\xi_{j}|)\big] F_{j}(D^{2}u) & = f_{j}        \quad  &&   \text{in} \ \ B_{1} \cap \{y_{n} > \varphi(y')\}    ,    \\
          u_{j}(y)  & = g_{j}(y)    \quad  &&   \text{on}   \ \  B_{1}  \cap \{y_{n} = \varphi(y')\},        \\
     \end{alignedat}
     \right.
\end{equation*}
where $ \{\xi_{j}\}_{j \in \mathbb{N}} \subset \mathbb{R}^{n} $, $ \{f_{j}\}_{j \in \mathbb{N}} \subset C^{0}(B_{1} \cap \{y_{n} > \varphi(y')\})$, $ \sigma_{1,j}(\cdot) $, $ \sigma_{2,j}(\cdot) \in \mathscr{Q} $ and $ \{F_{j}\}_{j \in \mathbb{N}} \subset C^{0}(\mathrm{Sym}(n), \mathbb{R}) $ is uniformly $ (\lambda, \Lambda) $ elliptic. Suppose further that $ \xi_{j} \rightarrow \xi_{\infty} $, $ f_{j} \rightarrow 0 $, and $ F_{j} \rightarrow F_{\infty} $. Then there exists a subsequence from $ \{u_{j}\}_{j \in \mathbb{N}} $ which converges uniformly to $ u_{\infty} $ on $ \overline{B_{r} \cap \{y_{n} > \varphi(y')\} } $ for any $ 0 <r <1 $. Furthermore, the limit $ u_{\infty} $ solves
\begin{equation*}
\left\{
     \begin{alignedat}{2}
       F(D^{2}u_{\infty}) & = 0       \quad  &&  \text{in} \ \ B_{r} \cap \{y_{n} > \varphi(y')\}    ,        \\
          u_{\infty}(y)  & = g_{\infty}(y)    \quad  && \text{on}   \ \  B_{r}  \cap \{y_{n} = \varphi(y')\}.        \\
     \end{alignedat}
     \right.
\end{equation*}
\end{lemma}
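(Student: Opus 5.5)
The proof will follow the compactness scheme of \cite[Proposition 6]{APPT22}, adapted to the boundary setting. It has three steps: equicontinuity up to the flat-ish boundary piece, extraction of a convergent subsequence, and identification of the limit equation. Throughout, we assume (as the statement implicitly does) that the $u_j$ are uniformly bounded, that the $f_j$ are uniformly bounded with $\|f_j\|_{L^\infty}\to 0$, and that the $g_j$ are uniformly Lipschitz with $g_j\to g_\infty$ uniformly.

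\textbf{Step 1: uniform modulus of continuity on $\overline{B_{r}\cap\{y_n>\varphi(y')\}}$.}

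In the interior, the Ishii--Lions method gives a uniform H\"older estimate. Equivalently, we can use the fact that where $|Du_j+\xi_j|$ is large the coefficient is bounded below. This uses $\sigma_{1,j}\ge\sigma_{2,j}$ and the non-collapsing property, which keeps $\sigma_{1,j}$ away from zero outside a neighbourhood of $0$. On that region, $u_j$ lies in the class $S(\lambda,\Lambda,C|f_j|)$, and Krylov--Safonov estimates apply. Where the gradient is small, the Lipschitz control is automatic.

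At the boundary, we build barriers from the $C^2$ distance function together with the uniform interior sphere condition. We use radial barriers of the form $\pm\bigl(g_j(y_0)+L|y-y_0|\bigr)\pm \kappa\,(d(y)-d(y)^{1+\gamma})$. The degenerate factor is harmless here because the barrier is chosen with a large gradient. If $|D\psi+\xi_j|\ge1$, then by \textbf{(A5a)} the factor $\sigma_{1,j}+a_j\sigma_{2,j}$ is bounded below, and the barrier is a strict super/subsolution via $\mathscr P^{\pm}$. If $\xi_j$ is large, we instead take the barrier gradient pointing along $\xi_j$, so the same lower bound holds.

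The comparison principle then yields $|u_j(y)-g_j(y_0)|\le C|y-y_0|^{\gamma}$ for boundary points $y_0$. Combining this with the interior estimate by the standard patching argument gives a modulus that is uniform in $j$.

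\textbf{Step 2: extraction.} By Arzel\`a--Ascoli, a subsequence converges uniformly on $\overline{B_r\cap\{y_n>\varphi(y')\}}$ to some $u_\infty$. Passing to the limit on the boundary piece gives $u_\infty=g_\infty$ there.

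\textbf{Step 3: the limit equation.} This step is the main obstacle. Let $P$ be a quadratic polynomial touching $u_\infty$ strictly from below at an interior point $x_0$, and suppose for contradiction that $F_\infty(D^2P)>0$. Perturbing $P$ gives touching points $x_j\to x_0$ for $u_j$, at which
\[
\bigl[\sigma_{1,j}(|DP(x_j)+\xi_j|)+a_j(x_j)\sigma_{2,j}(|DP(x_j)+\xi_j|)\bigr]F_j(D^2P)\le f_j(x_j)\to0 .
\]
Since $F_j(D^2P)\to F_\infty(D^2P)>0$, the bracket must tend to $0$. Because $a_j\ge0$, this forces $\sigma_{1,j}(|DP(x_j)+\xi_j|)\to0$, and non-collapsing then gives $|DP(x_0)+\xi_\infty|=0$.

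So the only problematic case is $DP(x_0)=-\xi_\infty$. We handle it as in \cite{IS13,APPT22}. First, if $D^2P$ has a negative eigenvalue direction, we use Lemma-type perturbations $P+\langle e,x-x_0\rangle$ with small $e$ chosen so that $DP+\xi_\infty\ne0$. The touching points still converge, and we again obtain a contradiction. Otherwise $D^2P\ge0$ in the relevant sense. In that case we replace $P$ by $P(x)-\varepsilon|(x-x_0)\cdot e|$-type tilts, or argue via the reduction to the equation restricted to $\{|Du+\xi|\neq0\}$.

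Concretely, we use the known equivalence: a continuous $w$ with $F(D^2w)\le0$ tested only against functions whose gradient is nonzero at the touching point is a genuine viscosity solution of $F(D^2w)=0$. Applying this to $w=u_\infty+\xi_\infty\cdot x$ concludes. The subsolution case is symmetric.
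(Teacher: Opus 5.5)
Your overall architecture is the one the paper intends. The paper only says to follow \cite[Proposition 6]{APPT22}, with compactness supplied by the barrier and Ishii--Lions estimates of Section~\ref{Section 3}. Your Step~3 is correct in substance: non-collapsing forces $DP(x_0)+\xi_\infty=0$, and the Imbert--Silvestre lemma applied to $u_\infty+\xi_\infty\cdot x$ disposes of that case. The sentences preceding ``Concretely'' should be dropped, though. The relevant subspace is the positive eigenspace $E$ of $D^2P$, which is nontrivial since $0<F_\infty(D^2P)\le\mathscr{P}^{+}_{\lambda,\Lambda}(D^2P)$, and the correction is $+\varepsilon|P_E(x-x_0)|$, not a concave kink. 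The interior Ishii--Lions estimate is also fine, since $\xi_j$ is bounded and non-collapsing gives $m:=\inf_j\inf_{t\ge 1}\sigma_{1,j}(t)>0$.

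The step that fails is the boundary barrier in Step~1. Take $w^+=g_j(y_0)+L|y-y_0|+\kappa\,(d-d^{1+\gamma})$ and look at $y=y_0+s\nu$ on the inner normal, where $d(y)=s$. The cone contributes $n-1$ tangential eigenvalues equal to $L/s$. The distance term contributes one normal eigenvalue $-\kappa\gamma(1+\gamma)s^{\gamma-1}$ and only $O(\kappa)$ in the tangential directions. Hence $F_j(D^2w^+)\ge\mathscr{P}^{-}_{\lambda,\Lambda}(D^2w^+)\ge\lambda(n-1)(L/s-C\kappa)-\Lambda\kappa\gamma(1+\gamma)s^{\gamma-1}>0$ for small $s$, because $s^{-1}\gg s^{\gamma-1}$. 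Since the gradient is large there, the coefficient is at least $m$, so $w^+$ is not a supersolution and the comparison cannot be run; a convex cone can never sit inside an upper barrier.

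Replace it by the exterior-sphere barrier; it is the exterior, not the interior, sphere condition that matters here. Let $B_\rho(z)$ be an exterior ball meeting the closure of $\{y_n>\varphi(y')\}$ only at $y_0$, set $b(y)=\rho^{-p}-|y-z|^{-p}$, and take $p$ large. Then $\mathscr{P}^{+}_{\lambda,\Lambda}(D^2b)\le -c<0$ and $|Db|\ge pR^{-p-1}$, where $R$ bounds $|y-z|$ on the region. Set $w^{\pm}=g_j(y_0)\pm(\varepsilon+K_\varepsilon b)$ and take $K_\varepsilon$ large in terms of $\sup_j|\xi_j|$, $\sup_j\|f_j\|_\infty$, $\sup_j\|u_j\|_\infty$, $m$ and a common modulus of continuity of the $g_j$. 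This gives $|Dw^{\pm}+\xi_j|\ge1$, makes $w^{\pm}$ strict super/subsolutions, and gives $w^-\le u_j\le w^+$ on $\partial\bigl(B_\delta(y_0)\cap\{y_n>\varphi(y')\}\bigr)$. Touching $u_j$ at an interior extremum of $u_j-w^{\pm}$ then contradicts the equation. Hence $|u_j(y)-g_j(y_0)|\le\varepsilon+CK_\varepsilon|y-y_0|$ uniformly in $j$, which is what your patching argument needs. Alternatively, as the paper does, subtract $\xi_j\cdot y$ as in Case~2 of Lemma~\ref{Se4:lemma1} and use Lemma~\ref{Se3:lemma1} and Theorem~\ref{Se4:thm3}. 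Your provision for large $\xi_j$ is moot here, since $\xi_j\to\xi_\infty$.
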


We finish this section by providing the interior regularity results shown in \cite[Theorem 1]{AN25}.

\begin{theorem}
\label{Se2:thm3}
Let $ u \in C^{0}(\Omega) $ be a normalized viscosity solution to
\begin{equation*}
  [\sigma_{1}(|Du|)+a(x)\sigma_{2}(|Du|)] F(D^{2}u) = f        \quad    \text{in}  \ \ \Omega,
\end{equation*}
where $ 0 \leq a(\cdot) \in C^{0}(\Omega)   $ and $ \sigma_{1}(\cdot), \sigma_{2}(\cdot)$ are moduli of continuity with inverses $ \sigma_{1}^{-1}, \sigma_{2}^{-1} $. Suppose $ F $ is a uniformly $ (\lambda, \Lambda) $ elliptic, $ \sigma_{2} = o(\sigma_{1}(t)) $, $ \sigma_{2}^{-1} $ is Dini continuous, and $ f \in L^{\infty}(\Omega) $. Then $ C^{1}_{\mathrm{loc}}(\Omega)$ and there exists a modulus of continuity $ \omega: [0,+\infty) \rightarrow [0,+\infty) $, depending only on $ n, \lambda, \Lambda, \sigma_{1}, \sigma_{2} $ and $ ||f||_{L^{\infty}(\Omega)}$ such that
\begin{equation*}
  |Du(x) - Du(y)| \leq \omega(|x-y|)
\end{equation*}
for every $ x,y \in \Omega' \Subset \Omega $.
\end{theorem}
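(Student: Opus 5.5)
The plan is a geometric iteration by affine approximation, driven by a compactness argument. The approximation is uniform with respect to the degeneracy laws, and the Dini condition on $\sigma_{2}^{-1}$ is what makes the successive corrections summable. Throughout I assume $u$ is normalized, $\|u\|_{L^\infty}\le 1$ and $\|f\|_{L^\infty}\le\varepsilon_0$ (scaling reduces to this), and I work in $B_1\Subset\Omega$.

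\textbf{Step 1: compactness estimate.} For the perturbed equation $[\sigma_{1}(|Dv+\xi|)+a(x)\sigma_{2}(|Dv+\xi|)]F(D^{2}v)=f$, I want an interior $C^{0,\gamma}$ (in fact Lipschitz) bound that is independent of $\xi\in\mathbb{R}^n$. I would prove it by the Ishii--Lions doubling of variables.
\begin{itemize}
\item At points where $|Dv+\xi|$ is large, the coefficient is bounded below by $\sigma_{1}(|Dv+\xi|)\ge\sigma_1(1)\geq 1$. So the equation is uniformly elliptic there with bounded right-hand side.
\item Where $|Dv+\xi|$ is small, the test gradient is pinned near $-\xi$, and the concavity of the Ishii--Lions test function yields the contradiction directly.
\end{itemize}

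\textbf{Step 2: approximation lemma.} For every $\delta>0$ there is $\varepsilon>0$ with the following property. If $v$ solves the perturbed equation in $B_1$ with $|v|\le 1$ and $\|f\|_\infty\le\varepsilon$, and the pair $(\sigma_1,\sigma_2)$ belongs to a fixed non-collapsing family $\boldsymbol{\mathscr{Q}}$, then some $h$ with $F(D^2h)=0$ satisfies $\|v-h\|_{L^\infty(B_{1/2})}\le\delta$. I argue by contradiction.
\begin{itemize}
\item Step 1 gives equicontinuity, so Arzel\`a--Ascoli extracts a uniform limit.
\item The interior analogue of Lemma~\ref{Se2:lemma5} identifies the limit as $F_\infty$-harmonic. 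Heuristically, whenever $|D\phi+\xi_j|$ stays away from zero, $f_j/(\sigma_{1,j}+a_j\sigma_{2,j})\to0$.
\item Passing to the limit in this quotient needs the non-collapsing property, since $a\ge0$ only gives $\sigma_{1}+a\sigma_{2}\ge\sigma_{1}$.
\item If $|D\phi+\xi_\infty|=0$ at the touching point, I use the standard Imbert--Silvestre device: perturb the test function by a small quadratic.
\end{itemize}

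\textbf{Step 3: iteration.} Fix $\rho\in(0,1/2)$ from the $C^{1,\bar\alpha}$ estimates for $F$-harmonic functions, so that $h$ is $\tfrac12\rho$-close to an affine function on $B_\rho$. I then build affine maps $\ell_k$ and scales $\theta_k$ inductively so that
\[
\sup_{B_{\rho^k}}|u-\ell_k|\le \rho^k\theta_k,\qquad |D\ell_{k+1}-D\ell_k|\le C\theta_k .
\]
The rescaled function $v_k(x)=(u-\ell_k)(\rho^k x)/(\rho^k\theta_k)$ solves an equation of the same type with the following data:
\begin{itemize}
\item $\xi_k=D\ell_k/\theta_k$;
\item new moduli $\sigma_i^k(t)=\sigma_i(\theta_k t)/\sigma_i(\theta_k)$;
\item right-hand side of size $\rho^k f/(\theta_k\,\sigma_1(\theta_k))$.
\end{itemize}
Choosing $\theta_k\approx\sigma_2^{-1}(\rho^{k\beta})$, up to constants, keeps this right-hand side below $\varepsilon$. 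By Remark~\ref{Rk26}, $\sum_k\theta_k<\infty$. Hence $D\ell_k\to Du(0)$, and the tail sums $\sum_{j\ge k}\theta_j$ together with $\sigma_2^{-1}$ produce the modulus $\omega$. Translating the argument to every point of $\Omega'$ gives the uniform bound $|Du(x)-Du(y)|\le\omega(|x-y|)$.

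\textbf{Main obstacle.} The hard step is Step 2 for the rescaled moduli $\sigma_i^k$. The family $\{\sigma_1^k+a_k\sigma_2^k\}$ need not be non-collapsing, so the compactness argument could produce a degenerate limit equation. I would follow the \emph{shored-up} construction.
\begin{itemize}
\item Apply Lemma~\ref{Au:prop1} to the $\ell^1$ sequence $(\sigma_2^{-1}(\rho^k))_k$. This yields $c_k\to0$ with $(\sigma_2^{-1}(\rho^k)/c_k)\in\ell^1$.
\item Redefine the scales as $\theta_k:=\sigma_2^{-1}(\rho^k)/c_k$. This is still summable, which is exactly what Step 3 needs.
\item The reinforced moduli then satisfy $\inf_k\sigma_1^k(c_k)>0$. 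By Lemma~\ref{Au:lem23} they form a non-collapsing set.
\end{itemize}
The delicate bookkeeping is twofold. The smallness of the right-hand side must survive the extra factor $c_k$. The hypothesis $\sigma_2=o(\sigma_1)$ must be used to let the $a$-term be absorbed into, or dominated by, $\sigma_1^k$. If this fails, I would instead rescale by $\sigma_1$ and use the $o(\cdot)$ relation only in the limit.

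Finally, in the regime where $|D\ell_k|\gg\theta_k$, the equation is uniformly elliptic near the point. There I would stop the iteration and invoke the Caffarelli $C^{1,\bar\alpha}$ estimate directly.
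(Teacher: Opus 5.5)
The paper does not prove this theorem; it cites it from \cite[Theorem 1]{AN25}. That proof, mirrored here in Section~\ref{Subsection4.1} and Proposition~\ref{Se4:prop1}, has your architecture: compactness uniform in $\xi$, approximation by $F$-harmonic functions over a non-collapsing family, shored-up rescaled moduli, and a geometric iteration. There is a genuine gap in the choice of scales, which is the heart of that proof.

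\textbf{Why your scales fail.} In your normalization the right-hand side carries the factor $\rho^k/(\theta_k\sigma_1(\theta_k))$, so non-collapsing of $\sigma_1^k$ alone is not the relevant condition. Multiplying through, your rescaled equation reads
\[
\frac{\theta_k}{\rho^k}\Big[\sigma_1\big(\theta_k|Dv_k+\xi_k|\big)+a(\rho^k x)\,\sigma_2\big(\theta_k|Dv_k+\xi_k|\big)\Big]F_k(D^2v_k)=f(\rho^k x).
\]
The compactness step needs these coefficients to form a non-collapsing family. Shoring-up secures this through $\frac{\theta_k}{\rho^k}\sigma_2(\theta_kc_k)\ge 1$, i.e.\ $\theta_kc_k\ge\sigma_2^{-1}(\rho^k/\theta_k)$. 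This is an implicit relation, with $\theta_k$ inside $\sigma_2^{-1}$. Your explicit choice $\theta_kc_k=\sigma_2^{-1}(\rho^k)$ drops that factor and gives $\frac{\theta_k}{\rho^k}\sigma_2(\theta_kc_k)=\theta_k\to0$.

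Take $\sigma_1=\sigma_2=t^p$. Then $\sigma_1^k(c_k)=c_k^p\to0$, so your claim $\inf_k\sigma_1^k(c_k)>0$ is false. The right-hand side factor is $c_k^{1+p}\rho^{-k/p}$, which is unbounded unless $c_k\lesssim\rho^{k/(p(1+p))}$, and Lemma~\ref{Au:prop1} gives no such decay. The coefficients $\rho^{k/p}c_k^{-1-p}t^p$ then genuinely collapse, so the approximation lemma cannot be applied.

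\textbf{Missing ratio constraint.} Passing from $k$ to $k+1$ forces $\theta_{k+1}\ge\mu_1\theta_k$, where $\mu_1:=2C\rho^{\bar\alpha}\in(\rho,1)$ comes from $\sup_{B_\rho}|h-\tilde\ell|\le C\rho^{1+\bar\alpha}$. Your ``$\tfrac12\rho$-close'' only yields $\theta_{k+1}=\theta_k$, i.e.\ no decay at all. A formula built from $\sigma_2^{-1}(\rho^k)$ can also violate this bound, e.g.\ when $\sigma_2^{-1}$ decays super-geometrically along geometric sequences.

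\textbf{What the cited proof does instead.} In \cite{APPT22,AN25} the scales are products $\theta_k=\mu_1\cdots\mu_k$ with $\mu_1\le\mu_k<1$. Each $\mu_k$ is raised above its default value only when needed to force $\frac{\theta_k}{\rho^k}\sigma_2(\theta_kc_k)\ge1$, with equality in that case. Since $\theta_k\ge\mu_1^k$, at those steps
\[
\theta_kc_k=\sigma_2^{-1}(\rho^k/\theta_k)\le\sigma_2^{-1}(\vartheta^k),\qquad \vartheta:=\rho/\mu_1<1.
\]
So Lemma~\ref{Au:prop1} must be applied to $(\sigma_2^{-1}(\vartheta^k))_k$, not to $(\sigma_2^{-1}(\rho^k))_k$. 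The ratio constraint is then built in, shoring-up holds by construction, and this bound is what yields $\sum_k\theta_k<\infty$.

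The $a$-term needs no separate normalization. Since $a\ge0$ and $\sigma_2\le\sigma_1$ near $0$, we get $\frac{\theta_k}{\rho^k}\big[\sigma_1(\theta_kc_k)+a\,\sigma_2(\theta_kc_k)\big]\ge\frac{\theta_k}{\rho^k}\sigma_2(\theta_kc_k)\ge1$, and Lemma~\ref{Au:lem23} applies.

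\textbf{A smaller point in Step 1.} Where $|q+\xi|$ is small the coefficient degenerates, and the viscosity inequalities give only the lower bound $-\|f\|_\infty/\sigma(|q+\xi|)$, which can be arbitrarily negative. Concavity of the test function therefore yields no contradiction there. The standard route uses Ishii--Lions only for $|\xi|>A_0\ge 3L_1$, as in Theorem~\ref{Se4:thm4}. For $|\xi|\le A_0$ it applies the $\xi=0$ estimate to $v+\xi\cdot x$, as in Case~2 of Lemma~\ref{Se4:lemma1}.
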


\section{Local Lipschitz estimates up to the boundary}\label{Section 3}
In this section, to obtain further Lipschitz estimate up to the boundary as in \cite{BBLL24, BSRR23}, we shall consider a bounded viscosity solution to
\begin{equation}
\label{Section3:eq1}
\left\{
     \begin{alignedat}{2}
       \big[\sigma_{1}(|Du+\xi|)+a(y)\sigma_{2}(|Du+\xi|)\big] F(D^{2}u) & = f        \quad  &&  \text{in} \ \ B_{1} \cap \{y_{n} > \varphi(y')\}    ,    \\
          u(y)  & = g(y)    \quad  &&   \text{on}   \ \  B_{1}  \cap \{y_{n} = \varphi(y')\},        \\
     \end{alignedat}
     \right.
\end{equation}
with $ \varphi $ as defined in Section \ref{Intro}.

\begin{remark}[{Smallness regime}]
\label{Se3:re1}
In the proof of Theorem \ref{Thm1}, we require
\begin{equation*}
  ||u||_{L^{\infty}(B_{1}\cap \{y_{n}>\varphi(y')\})} \leq 1, \quad ||g||_{C^{1,\alpha}(B_{1} \cap \{y_{n} = \varphi(y')\} )} \leq 1, \quad \text{and} \quad ||f||_{L^{\infty}(B_{1}\cap \{y_{n}>\varphi(y')\})} \leq \epsilon_{0}
\end{equation*}
for some small enough constant $ \epsilon_{0} \in (0,1) $. In effect, for a fixed ball $ B_{r}(x) \subset B_{1} $, we set $ \widetilde{u}: B_{1} \cap \{y_{n}>\widetilde{\varphi}(y')\} \rightarrow \mathbb{R} $ by
\begin{equation*}
  \widetilde{u}(y):=\frac{u(ry+x)}{K}
\end{equation*}
for a function $ \widetilde{\varphi} $ and $ K \geq 1 $ to be determined later. A straightforward computation shows that $ \widetilde{u} $ is a viscosity solution to
\begin{equation}
\label{Section3:eq2}
\left\{
     \begin{alignedat}{2}
       \big[\widetilde{\sigma_{1}}(|D\widetilde{u}+\widetilde{\xi}|)+\widetilde{a}(y)\widetilde{\sigma_{2}}(|D\widetilde{u}+\widetilde{\xi}|)\big] \widetilde{F}(D^{2}\widetilde{u}) & = \widetilde{f}        \quad && \text{in} \ \ B_{1} \cap \{y_{n} > \widetilde{\varphi}(y')\}    ,    \\
          \widetilde{u} & = \widetilde{g}    \quad  &&  \text{on}   \ \  B_{1}  \cap \{y_{n} = \widetilde{\varphi}(y')\}        \\
     \end{alignedat}
     \right.
\end{equation}
where
\begin{align*}
&   \widetilde{\sigma_{i}}(t):= \sigma_{i}\bigg(\frac{K}{r}t\bigg), i=1,2; \quad \widetilde{\xi}:= \frac{r}{K}\xi; \quad \widetilde{f}(y):= \frac{r^{2}}{K} f(ry+x); \quad \widetilde{a}(y):=a(ry+x);    \\
& \widetilde{F}(\text{M}):= \frac{r^{2}}{K}     F\bigg(\frac{K}{r^{2}}\text{M}\bigg); \quad \widetilde{g}(y):=\frac{g(ry+x)}{K}; \quad  \text{and}  \quad \widetilde{\varphi}(y'):=\frac{\varphi(ry'+x')-x_{n}}{r}.
\end{align*}
Note that if $ \sigma_{1} $ and $ \sigma_{2} $ admit inverse, then it can be seen that
\begin{equation*}
  \widetilde{\sigma_{i}}^{-1}(t):= \frac{r}{K} \sigma_{i}^{-1}(t), \quad  i=1,2.
\end{equation*}
In fact, $$  \widetilde{\sigma_{i}}^{-1}(\widetilde{\sigma_{i}}(t)) = \frac{r}{K} \sigma_{i}^{-1}(\widetilde{\sigma_{i}}(t)) = \frac{r}{K}\sigma_{i}^{-1}(\sigma_{i}\big(\frac{K}{r} t \big)) = t .     $$
Furthermore, by choosing $ r < K $, we readily obtain
\begin{equation*}
\left\{
     \begin{aligned}
  & \int_{0}^{\tau} \frac{\widetilde{\sigma_{2}}^{-1}(t)}{t} dt \leq \int_{0}^{\tau} \frac{\sigma_{2}^{-1}(t)}{t} dt < \infty;  \\
  & \widetilde{\sigma_{1}}(1) = \sigma_{1} \bigg( \frac{K}{r}  \bigg) \geq \sigma_{1}(1) \geq \sigma_{2}(1) \geq 1;   \\
  & \lim_{t \rightarrow 0} \widetilde{\sigma_{i}}(t) = 0, \quad i=1,2,  \quad  \text{and}  \quad  0< \widetilde{a}(y) \in C^{0}(B_{1} \cap \{y_{n} > \widetilde{\varphi}(y')\}).
  \end{aligned}
     \right.
\end{equation*}
Here we choose
\begin{equation*}
r:= \epsilon_{0}^{\frac{1}{2}}  \quad \text{and} \quad K:= 2(1+||u||_{L^{\infty}(\Omega)}+||g||_{C^{1,\alpha}(\partial \Omega)}+||f||_{L^{\infty}(\Omega)}),
\end{equation*}
then it is immediate from the choice of $ r $ and $ K $ that $ ||D^{2}\widetilde{\varphi}||_{L^{\infty}(B_{1} \cap \{y_{n} > \widetilde{\varphi}(y')\})} \leq ||D^{2}\varphi||_{L^{\infty}(B_{1} \cap \{y_{n} > \varphi(y')\})}$, and
\begin{equation*}
  ||\widetilde{g}||_{C^{1,\alpha}(B_{1}  \cap \{y_{n} = \widetilde{\varphi}(y')\})} \leq \frac{1}{K}||g||_{C^{1,\alpha}(\partial \Omega)} \leq ||g||_{C^{1,\alpha}(\partial \Omega)}.
\end{equation*}
Hence, in what follows, we will always assume that the conditions
\begin{equation*}
  ||u||_{L^{\infty}(B_{1}\cap \{y_{n}>\varphi(y')\})} \leq 1, \quad ||g||_{C^{1,\alpha}(B_{1} \cap \{y_{n} = \varphi(y')\} )} \leq 1, \quad \text{and} \quad ||f||_{L^{\infty}(B_{1}\cap \{y_{n}>\varphi(y')\})} \leq \epsilon_{0}
\end{equation*}
hold.
\end{remark}

The following lemma describes the boundary behavior of a viscosity solution $ u $ in terms of a distance function $ d $.

\begin{lemma}
\label{Se3:lemma1}
Let $ d $ be the distance to the hypersurface $ \{y_{n} = \varphi(y')\} $, and $ g \in C^{1,\alpha}(\partial \Omega) $. Then for every $ r \in (0,1) $ and $ \gamma \in (0,1) $, there exists $ \delta_{0}>0 $, depending only on $ ||f||_{L^{\infty}(B_{1}\cap \{y_{n}>\varphi(y')\})}, \lambda, \Lambda, \Omega, r, \sigma_{1}, \sigma_{2} $ and $ ||g||_{C^{1,\alpha}(B_{1} \cap \{y_{n} = \varphi(y')\} )} $, such that for every $ 0< \delta < \delta_{0} $, if $ u \in C^{0}(B_{1} \cap \{y_{n}>\varphi(y')\}) $ is a viscosity solution to \eqref{Section3:eq1} ($ \xi =0 $) with $ ||u||_{L^{\infty}(B_{1}\cap \{y_{n}>\varphi(y')\})} \leq 1 $, then
\begin{equation*}
  |u(y', y_{n})-g(y', \varphi(y'))| \leq \frac{8}{\delta} \frac{d(y)}{1+d(y)^{\gamma}} \quad \text{in} \quad B_{r} \cap \{y_{n} > \varphi(y')\}.     
\end{equation*}    
\end{lemma}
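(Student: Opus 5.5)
We will prove the estimate by a barrier argument: construct explicit upper and lower barriers built from the distance function $d$, and then apply the comparison principle for the degenerate operator. Since the ball $B_1$ has a $C^2$ boundary piece $\{y_n=\varphi(y')\}$, the distance $d$ is $C^2$ in a neighbourhood $\{0<d<\rho_0\}\cap B_1$, where $\rho_0$ depends only on $\|D^2\varphi\|_{L^\infty}$ (uniform interior sphere condition, \cite[Lemma 2.2]{AKSZ07}). On this set $|Dd|=1$ and $|D^2 d|\le C_0$.

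Fix $r\in(0,1)$ and $\gamma\in(0,1)$. The candidate barriers are
\[
w^{\pm}(y)=G(y)\pm \frac{8}{\delta}\,\psi(d(y)),\qquad \psi(s)=\frac{s}{1+s^{\gamma}},
\]
where $G$ is a $C^{1,\alpha}$ extension of $g$ to $B_1$ with $\|G\|_{C^{1,\alpha}}\le C\|g\|_{C^{1,\alpha}}$. If one prefers $C^2$, replace $G$ by a mollification at scale comparable to $d$; the Hessian error is then $\lesssim d^{\alpha-1}$, which is dominated by the $\psi''$ term below. More precisely, we localize near a boundary point $z_0=(z',\varphi(z'))$ with $|z'|<r$ and use
\[
w^{+}(y)=g(z_0)+Dg(z_0)\cdot(y-z_0)+C|y-z_0|^{1+\alpha}+\tfrac{8}{\delta}\psi(d(y)),
\]
restricted to a small neighbourhood $B_{\rho}(z_0)\cap\{y_n>\varphi(y')\}$. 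Since $|\psi'|\le 1$, we have $|Dw^{+}|\le C$.

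The key computation is
\[
\psi''(s)=-\frac{\gamma(1+\gamma)s^{\gamma-1}+\gamma(1-\gamma)s^{2\gamma-1}}{(1+s^{\gamma})^{3}}\le -c_\gamma s^{\gamma-1}<0
\]
for small $s$. Since $D^2(\psi\circ d)=\psi''\,Dd\otimes Dd+\psi' D^2d$, we obtain
\[
\mathscr P^{+}\big(D^2 w^{+}\big)\le \tfrac{8}{\delta}\big(-\lambda c_\gamma d^{\gamma-1}+n\Lambda C_0\big)+C\Lambda d^{\alpha-1}.
\]
Choose $\gamma>\alpha$... Note, however, that $\gamma$ is arbitrary in the statement. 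We must therefore either absorb $C\Lambda d^{\alpha-1}$ using the large factor $8/\delta$ (this works when $\gamma\le\alpha$, and for $\gamma>\alpha$ we instead use the quadratic-type correction $|y-z_0|^{1+\alpha}$ only along the boundary), or simply keep $G$ as a $C^2$ extension after the scaling in Remark \ref{Se3:re1}. I would first try the latter: work with a fixed $C^2$ extension (available after slightly strengthening locally, or via mollification with the loss absorbed). Then
\[
\mathscr P^{+}(D^2 w^+)\le -\tfrac{8}{\delta}\cdot\tfrac{\lambda c_\gamma}{2}d^{\gamma-1}
\]
on $\{d<\rho\}$ once $\rho$ is small, and this is $\le -\tfrac{c}{\delta}$.

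To conclude $w^+$ is a strict supersolution, we need $[\sigma_1(|Dw^+|)+a\sigma_2(|Dw^+|)]F(D^2w^+)<-\|f\|_\infty$. This requires a lower bound on $|Dw^+|$. We have $Dw^+=DG+\tfrac{8}{\delta}\psi'(d)Dd$ with $\psi'\ge 1/2$ for small $d$, so $|Dw^+|\ge \tfrac{4}{\delta}-\|DG\|_\infty\ge 1$ once $\delta<\delta_0$. Monotonicity (A5a) then gives $\sigma_1(|Dw^+|)\ge\sigma_1(1)\ge1$. Hence the left side is $\le -c/\delta<-\|f\|_\infty$ for $\delta$ small. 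This is exactly where $\delta_0$ depends on $\|f\|_\infty$, $\sigma_i$ and $\|g\|_{C^{1,\alpha}}$.

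The remaining step is to order $w^{\pm}$ and $u$ on the boundary of the region $U=B_\rho(z_0)\cap\{0<d<\rho\}$. On $\partial\Omega$ the barriers equal $g$ to first order, with the $C|y-z_0|^{1+\alpha}$ term giving $w^+\ge g$. On the inner part of $\partial U$ we need $w^+\ge 1\ge u$, which holds when $\tfrac{8}{\delta}\psi(\rho/2)\ge 2+\|G\|_\infty$, again true for $\delta$ small relative to $\rho$. To handle the lateral part $\partial B_\rho(z_0)$, I would add $M|y'-z'|^2$ to the barrier with $M$ large; its Hessian contribution $2M\Lambda$ is absorbed by the $d^{\gamma-1}/\delta$ term. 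The comparison principle for viscosity solutions against a strict classical supersolution (test $u-w^+$ at an interior maximum) then yields $u\le w^+$ in $U$, and symmetrically $u\ge w^-$. Evaluating at $y=(z',y_n)$ and letting $z'$ range over $|z'|<r$ gives the claim near the boundary. Far from the boundary ($d\ge\rho$), the claim is trivial since $\tfrac{8}{\delta}\psi(d)\ge2$.

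I expect the main obstacle to be the lower bound on the barrier gradient, needed because the equation degenerates at $Du=0$. The remedy is the dominance of $\tfrac{8}{\delta}\psi'(d)$ over $|DG|$, which is why the prefactor is $1/\delta$ rather than an $O(1)$ constant. A secondary difficulty is the bookkeeping needed to make the lateral correction and the $C^{1,\alpha}$ extension error compatible with the negative term $\psi''$.
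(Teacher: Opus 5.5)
Your strategy is the barrier-and-comparison argument the paper intends; it omits the proof and defers to \cite[Lemma 4.3]{BBLL24}. The ingredients are barriers $G\pm\frac{8}{\delta}\psi(d)$ with $\psi(s)=\frac{s}{1+s^{\gamma}}$, and a lower bound $|Dw^{\pm}|\ge 1$ coming from the dominance of $\frac{8}{\delta}\psi'(d)$, so that (A5a) gives $\sigma_1+a\sigma_2\ge 1$. The last step is a touching argument against a strict classical super/subsolution. As written, however, the proposal does not prove the statement for every $\gamma\in(0,1)$, and it fails exactly where you sensed trouble.

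You can encode the $C^{1,\alpha}$ datum by a mollified extension $G$ with $|D^2G|\lesssim d^{\alpha-1}$, or by the pointwise correction $C|y-z_0|^{1+\alpha}$. The Hessian of the latter has all eigenvalues of order $+|y-z_0|^{\alpha-1}$, and along the inner normal at $z_0$ one has $d(y)=|y-z_0|$. Either way, the bad contribution to $\mathscr{P}^{+}(D^2w^{+})$ is of order $d^{\alpha-1}$, while the good one is $\frac{8}{\delta}\lambda c_\gamma d^{\gamma-1}$. Their ratio behaves like $d^{\gamma-\alpha}/\delta$, which tends to $0$ as $d\to 0$ when $\gamma>\alpha$, however small $\delta$ is. So your claim that the mollification error ``is dominated by the $\psi''$ term'' holds only for $\gamma\le\alpha$, and ``choose $\gamma>\alpha$'' points the wrong way. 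The remedy you say you would try first, a fixed $C^2$ extension of $g$, is not legitimate: it assumes more regularity of $g$ than (A3) provides. Using the $|y-z_0|^{1+\alpha}$ correction ``only along the boundary'' does not improve the Hessian bound either.

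The missing observation is elementary. Since $0$ lies on the hypersurface, $d(y)\le |y|<1$ on $B_r\cap\{y_n>\varphi(y')\}$. Hence for any $\gamma_0,\gamma\in(0,1)$ one has $\frac{d}{1+d^{\gamma_0}}\le d\le 2\,\frac{d}{1+d^{\gamma}}$. It therefore suffices to run your barrier with $\gamma_0:=\min\{\gamma,\alpha\}$. There $d^{\gamma_0-1}\ge d^{\alpha-1}$ for $d\le 1$, so your absorption argument works once $\delta$ is small. You then pass to the given $\gamma$ by replacing $\delta_0$ with $\delta_0/2$.

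The same comparability is also needed, and not written, at your final step. At $y=(z',y_n)$ the terms $Dg(z_0)\cdot(y-z_0)$ and $C|y-z_0|^{1+\alpha}$ do not vanish. They are $O(d)$, because the vertical distance to the graph is comparable to $d$, and they are absorbed into $\frac{8}{\delta}\psi(d)$ only after shrinking $\delta$ once more. With these adjustments the argument goes through, provided $\rho$ and the lateral constant $M$ are fixed before $\delta$ is chosen, which your scheme allows.
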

This lemma can be proved similar to \cite[Lemma 4.3]{BBLL24} by the barrier function method and comparison principle. For the sake of brevity, we omit its proof.

\begin{theorem}[{H\"{o}lder estimates for $ \xi =0 $}]
\label{Se4:thm3}
Let $ g $ be Lipschitz continuous datum. Suppose that $ u \in C^{0}(B_{1} \cap \{y_{n}>\varphi(y')\}) $ satisfies \eqref{Section3:eq1}. Then for every $ r, \gamma\in (0,1) $, $ u \in C^{0,\gamma}(B_{r} \cap \{y_{n}>\varphi(y')\}) $. Furthermore, we have the following estimate:
\begin{equation*}
  ||u||_{C^{0,\gamma}(B_{r} \cap \{y_{n}>\varphi(y')\})}  \leq C(\lambda, \Lambda, n, r, \sigma_{1}, \sigma_{2}, ||f||_{L^{\infty}(\Omega)}, ||a||_{L^{\infty}(\Omega)}, \mathrm{Lip}_{g}(\partial \Omega)).
\end{equation*}
\end{theorem}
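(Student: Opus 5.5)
We follow the strategy of \cite{BBLL24, BSRR23}: combine the boundary control of Lemma \ref{Se3:lemma1} with an interior Ishii--Lions argument. By Remark \ref{Se3:re1} we work in the smallness regime $\|u\|_{L^\infty}\le 1$, $\|f\|_{L^\infty}\le\epsilon_0$ and $\mathrm{Lip}_g\le 1$. We fix $r<r'<1$ and $\gamma\in(0,1)$, and aim to show
\[
u(x)-u(y)\le L|x-y|^{\gamma}
\]
for $x,y\in B_r\cap\{y_n>\varphi(y')\}$, with $L$ large and depending only on the listed quantities. The proof splits into boundary pairs and interior pairs.

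\emph{Step 1 (boundary pairs).} If one of the points lies on the boundary $\{y_n=\varphi(y')\}$, Lemma \ref{Se3:lemma1} gives the bound directly. Indeed, writing $\bar y=(y',\varphi(y'))$, we have
\[
|u(y)-g(\bar y)|\le \tfrac{8}{\delta}\,d(y)\le C|y-\bar y|.
\]
Combining this with the Lipschitz continuity of $g$ and the triangle inequality yields $|u(x)-u(y)|\le C|x-y|$ whenever $\max(d(x),d(y))\gtrsim|x-y|$ fails in the appropriate way. More precisely, if $|x-y|\ge \frac{1}{4}\max(d(x),d(y))$, we compare both points to their boundary projections and obtain a Lipschitz, hence $\gamma$-H\"older, bound. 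It therefore remains to treat pairs with $|x-y|<\frac14 d(x)$, where both points are well inside the domain at scale $d(x)$.

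\emph{Step 2 (Ishii--Lions doubling).} We consider
\[
\Phi(x,y)=u(x)-u(y)-L\omega(|x-y|)-\frac{M}{2}|x-x_0|^2-\frac{M}{2}|y-x_0|^2,
\]
with $\omega(t)=t^\gamma$, for $x_0$ at positive distance from the boundary, or alternatively a localization adapted to the distance function. We argue by contradiction, assuming $\sup\Phi>0$. The localization terms together with Step 1 force the maximum point $(\bar x,\bar y)$ to be interior with $\bar x\ne\bar y$. We then apply the Jensen--Ishii lemma to get jets $(p_x,X)$ and $(p_y,Y)$ with
\[
p_x=L\omega'(|\bar x-\bar y|)\,e+M(\bar x-x_0),\qquad p_y=L\omega'(|\bar x-\bar y|)\,e-M(\bar y-x_0),\qquad e=\frac{\bar x-\bar y}{|\bar x-\bar y|}.
\]
For $L\gg M$ we have $|p_x|,|p_y|\ge \frac{L}{2}\gamma|\bar x-\bar y|^{\gamma-1}\ge \frac{L\gamma}{2}$, and this is where $\xi=0$ is used.

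By the monotonicity \hyperref[A5a]{\bf (A5a)} and $\sigma_1(1)\ge1$, the coefficient satisfies $\sigma_1(|p|)+a\sigma_2(|p|)\ge\sigma_1(1)\ge 1$ once $L$ is large. Dividing the equation by this coefficient gives
\[
F(X)\ge -\epsilon_0, \qquad F(Y)\le \epsilon_0 .
\]
This is the step that removes the degeneracy. The standard matrix inequality shows that $X-Y$ has an eigenvalue in direction $e$ of size at most $-cL\gamma(1-\gamma)|\bar x-\bar y|^{\gamma-2}$. Uniform ellipticity \hyperref[A1]{\bf (A1)} then gives
\[
-2\epsilon_0\le \mathscr{P}^+(X-Y)\le -c\lambda L|\bar x-\bar y|^{\gamma-2}+C\Lambda M ,
\]
which is a contradiction for $L$ large.

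\emph{Main obstacle.} The delicate point is Step 1's interface with Step 2, namely ensuring that the maximum of $\Phi$ cannot occur with a point near the curved boundary. The plan is to use Lemma \ref{Se3:lemma1}, with its sublinear-in-$d$ profile, as a one-sided barrier. This gives $u(x)-u(y)\le C|x-y|$ whenever one point approaches the boundary, and the choice $L\ge 2C$ then rules out such maxima. The $C^2$ bound on $\varphi$, unchanged under the scaling of Remark \ref{Se3:re1}, keeps the constants uniform.

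Finally, a covering argument over $B_r\cap\{y_n>\varphi(y')\}$ combines the boundary and interior estimates into the stated $C^{0,\gamma}$ bound. The constant depends on $\lambda,\Lambda,n,r,\sigma_1,\sigma_2,\|f\|_\infty,\|a\|_\infty$ and $\mathrm{Lip}_g$; the dependence on $\|a\|_\infty$ enters only through the upper bound on the coefficient, which is used when $f$ is not small.
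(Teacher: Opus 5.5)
This is essentially the paper's own proof. Both run an Ishii--Lions doubling with $L|x-y|^{\gamma}$ and a quadratic localization at $x_0$, exclude boundary maxima via Lemma \ref{Se3:lemma1}, and remove the degeneracy because $\xi=0$ forces $|p|\ge 1$, hence $\sigma_1(|p|)+a\sigma_2(|p|)\ge\sigma_1(1)\ge 1$. Your near/far splitting and final covering are harmless but unnecessary, since $\Phi\le 0$ evaluated at $x=x_0$ already gives the estimate.

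One caveat is shared with the paper. Lemma \ref{Se3:lemma1} is stated for $C^{1,\alpha}$ data, and its linear bound $|u-g|\le Cd$ genuinely fails for merely Lipschitz $g$ (Example \ref{Se2:counterexample} has $C^1$ data). Under the stated Lipschitz hypothesis you should instead use a sublinear barrier giving $|u-g|\le C d^{\gamma}$, which is all the H\"older argument needs.
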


\begin{proof}
Let $ r_{1} \in (r,1) $ be fixed. For $ x_{0} \in B_{r} \cap \{y_{n}>\varphi(y')\} $. We define
\begin{equation}\label{Section 3:eq3}
  \Phi(x,y):= u(x) - u(y) - L_{1}|x-y|^{\gamma}  - L_{2}(|x-x_{0}|^{2}+|y-x_{0}|^{2}).
\end{equation}

We claim that for large enough $ L_{1}, L_{2} $, it holds
\begin{equation}\label{Section 3:eq4}
   \Phi(x,y)  \leq 0  \quad   \text{in}  \quad (B_{r_{1}}\cap \overline{\Omega}) \times (B_{r_{1}}\cap \overline{\Omega}).
\end{equation}
This claim implies the desired H\"{o}lder estimate of $ u $.

Firstly, suppose that $ y \in B_{r_{1}} \cap \{y_{n} = \varphi(y')\} $, by using Lemma \ref{Se3:lemma1} and the idea from \cite[Theorem 4.4]{BBLL24}, we can prove that $ \Phi(x,y)  \leq 0 $.

We now show \eqref{Section 3:eq4} by contradiction, suppose that there exists $ (\overline{x}, \overline{y}) \in (B_{r_{1}}\cap \overline{\Omega}) \times (B_{r_{1}}\cap \overline{\Omega}) $ such that
\begin{equation}\label{Section 3:eq5}
  \Phi(\overline{x}, \overline{y}):= \max_{(B_{r_{1}}\cap \overline{\Omega})\times (B_{r_{1}}\cap \overline{\Omega})} \Phi(x, y) > 0.
\end{equation}
By choosing $ L_{2} > \max \{ \frac{4}{(r_{1}-r)^{2}}, \frac{1}{2(r+r_{1})}\} $, then it is easy to see that $ \overline{x} \neq \overline{y} $ (otherwise, the claim is obviously true), $ (\overline{x}, \overline{y}) \in (B_{r_{1}} \cap \{y_{n}>\varphi(y')\})^{2} $ and $ (\overline{x}, \overline{y}) \in B_{\frac{r_{1}+r}{2}} \times B_{\frac{r_{1}+r}{2}} $. Thus, by Ishii-Lions lemma \cite[Theorem 3.2]{CIL92}, it follows that for every $ \epsilon > 0 $ small enough, there exist $\mathrm{X}, \mathrm{Y} \in \mathrm{Sym}(n) $ such that
\begin{equation}\label{Section 3:eq6}
  (\gamma L_{1}(\overline{x}-\overline{y})|\overline{x}-\overline{y}|^{\gamma-2}+2L_{2}(\overline{x}-x_{0}),\mathrm{X})\in \overline{J^{2,+}}u(\overline{x})  \end{equation}
  and
\begin{equation}\label{Section 3:eq7}
  (\gamma L_{1}(\overline{x}-\overline{y})|\overline{x}-\overline{y}|^{\gamma-2}-2L_{2}(\overline{y}-x_{0}),\mathrm{-Y})\in \overline{J^{2,-}}u(\overline{y})
  \end{equation}
with
\begin{equation}\label{Section 3:eq8}
\begin{pmatrix}
\mathrm{X}  &   0   \\
0  &    \mathrm{Y}
\end{pmatrix}
\leq
\begin{pmatrix}
\mathrm{Z}   &   -\mathrm{Z}   \\
-\mathrm{Z}  &    \mathrm{Z}
\end{pmatrix}
 + (2L_{2}+\epsilon)\textbf{I}\rm{d}_{n}, \ \  0 < \epsilon \ll 1.
\end{equation}

Before two viscosity inequalities are given, for simplicity, we denote
\begin{align}
\label{Section 3:eq9}
\begin{split}
& q_{\overline{x}}:= \gamma L_{1}(\overline{x}-\overline{y})|\overline{x}-\overline{y}|^{\gamma-2}+2L_{2}(\overline{x}-x_{0}) \\
& q_{\overline{y}}:= \gamma L_{1}(\overline{x}-\overline{y})|\overline{x}-\overline{y}|^{\gamma-2}-2L_{2}(\overline{y}-x_{0}).
\end{split}
\end{align}
Then from \eqref{Section 3:eq6} and \eqref{Section 3:eq7}, we have the following two viscosity inequalities:
 \begin{equation}\label{Section 3:eq10}
 \big[\sigma_{1}(|q_{\overline{x}}|)+a(\overline{x})\sigma_{2}(|q_{\overline{x}}|)\big] F(\mathrm{X}) \geq  f(\overline{x}) \geq -||f||_{\infty}
 \end{equation}
and
 \begin{equation}\label{Section 3:eq11}
 \big[\sigma_{1}(|q_{\overline{y}}|)+a(\overline{y})\sigma_{2}(|q_{\overline{y}}|)\big] F(\mathrm{-Y}) \leq  f(\overline{y}) \leq ||f||_{\infty}.
 \end{equation}
From \eqref{Section 3:eq8}, we can easily prove that
\begin{equation*}
  \mathscr{P}^{+}_{\lambda,\Lambda}(\mathrm{X}+\mathrm{Y}) \leq \lambda [4L_{1}\gamma(\gamma-1)|\overline{x}-\overline{y}|^{\gamma-2}+6L_{2}]+ 6L_{2}\Lambda(n-1),
  \end{equation*}
then by using \hyperref[A1]{\bf (A1)}, it yields
\begin{equation}\label{Section 3:eq12}
  F(\mathrm{X}) - F(\mathrm{-Y}) \leq  \mathscr{P}^{+}_{\lambda,\Lambda}(\mathrm{X}+\mathrm{Y})  \leq \lambda [4L_{1}\gamma(\gamma-1)|\overline{x}-\overline{y}|^{\gamma-2}+6L_{2}]+ 6L_{2}\Lambda(n-1).
\end{equation}
Noticed that from the choice of $ L_{1}, L_{2} $ and \eqref{Section 3:eq9}, one obtains $ |q_{\overline{x}}|\geq 1 $ and $ |q_{\overline{y}}|\geq 1 $, then
\begin{align}
\label{Section 3:eq13}
\begin{split}
& \big[\sigma_{1}(|q_{\overline{x}}|)+a(\overline{x})\sigma_{2}(|q_{\overline{x}}|)\big] \geq  \sigma_{1}(|q_{\overline{x}}|) \geq \sigma_{1}(1) \overset{\hyperref[A5a]{\bf (A5a)}}{\geq} 1      \\
& \big[\sigma_{1}(|q_{\overline{y}}|)+a(\overline{y})\sigma_{2}(|q_{\overline{y}}|)\big] \geq \sigma_{1}(|q_{\overline{y}}|) \geq \sigma_{1}(1) \overset{\hyperref[A5a]{\bf (A5a)}}{\geq}  1.
\end{split}
\end{align}

Now we combine \eqref{Section 3:eq10}--\eqref{Section 3:eq13} to get that
\begin{equation*}
  4 \lambda L_{1} \gamma(1-\gamma)|\overline{x}-\overline{y}|^{\gamma-2} \leq 2||f||_{\infty} + 6L_{2}[\lambda+\Lambda(n-1)],
\end{equation*}
which is a contradiction, provided $ L_{1} $ is large enough.
\end{proof}

\begin{remark}\label{Section3:rmk2}
If we define the auxiliary function
 \begin{equation*}
  \Phi(x,y):= u(x) - u(y) - L_{1} \omega(|x-y|)  - L_{2}(|x-x_{0}|^{2}+|y-x_{0}|^{2}).
\end{equation*}
where $ \omega(s):= s- \omega_{0}s^{\frac{3}{2}} $ if $ s \leq s_{0} = (\frac{2}{\omega_{0}})^{2} $ and $ \omega(s) = \omega(s_{0}) $ if $ s \geq s_{0} $. By following the same idea of Theorem~\ref{Se4:thm3}, one can see that $ C^{0, 1}(B_{r} \cap \{y_{n}>\varphi(y')\}) $ for both the degenerate and singular case.
\end{remark}

\begin{theorem}[{Lipschitz estimates for large $ |\xi| $}]
\label{Se4:thm4}
 Let $ g $ be a Lipschitz function on $ \partial \Omega $ and $ \xi \in \mathbb{R}^{n} $. Assume that $ u $ is a viscosity solution to \eqref{Section3:eq1} with
 \begin{equation*}
   ||u||_{L^{\infty}(B_{1}\cap \{y_{n}>\varphi(y')\})} \leq 1 \quad \text{and} \quad  ||f||_{L^{\infty}(B_{1}\cap \{y_{n}>\varphi(y')\})} \leq \epsilon_{0}.
 \end{equation*}
Then for all $ r \in (0,1) $, there exists $ A_{0} > 0 $, such that if $ |\xi| > A_{0} $, we have that $ u \in C^{0,1}(B_{1} \cap \{y_{n}>\varphi(y')\}) $ with the estimate
\begin{equation*}
  ||u||_{C^{0,1}(B_{1} \cap \{y_{n}>\varphi(y')\})}  \leq C(\lambda, \Lambda, n, r, \sigma_{1}, \sigma_{2}, ||f||_{L^{\infty}(B_{1}\cap \{y_{n}>\varphi(y')\})}, ||a||_{L^{\infty}(\Omega)}, \mathrm{Lip}_{g}(\partial \Omega)).
\end{equation*}
\end{theorem}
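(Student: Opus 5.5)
We would prove Theorem~\ref{Se4:thm4} with the same doubling-variables (Ishii--Lions) scheme as Theorem~\ref{Se4:thm3} and Remark~\ref{Section3:rmk2}. The one new ingredient is that a large $|\xi|$ makes the degenerate coefficient uniformly positive. Fix $r_1\in(r,1)$ and $x_0\in B_r\cap\{y_n>\varphi(y')\}$, and set
\begin{equation*}
  \Phi(x,y):= u(x)-u(y)-L_1\omega(|x-y|)-L_2\big(|x-x_0|^2+|y-x_0|^2\big),
\end{equation*}
with the concave modulus $\omega(s)=s-\omega_0 s^{3/2}$ for $s\le s_0$ from Remark~\ref{Section3:rmk2}. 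The goal is to show $\Phi\le 0$ on $(B_{r_1}\cap\overline\Omega)^2$ for $L_1,L_2$ large, depending only on the listed data and \emph{not} on $\xi$. This yields the Lipschitz bound at $x_0$, and hence on $B_r\cap\{y_n>\varphi(y')\}$.

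The boundary configurations come first. The barrier estimate of Lemma~\ref{Se3:lemma1} gives $|u(y)-g(y',\varphi(y'))|\le C d(y)$ near the flat piece. We would check that its proof carries over to $\xi\neq 0$ once $|\xi|$ is large: for the barrier $\psi$, the gradient $D\psi+\xi$ satisfies $|D\psi+\xi|\ge |\xi|-|D\psi|\ge 1$, so $\sigma_1(|D\psi+\xi|)\ge\sigma_1(1)\ge 1$ and the equation behaves uniformly elliptically with right-hand side at most $\epsilon_0$. This estimate, together with the Lipschitz continuity of $g$, handles the case where $\bar x$ or $\bar y$ lies on $\{y_n=\varphi(y')\}$, exactly as in \cite[Theorem 4.4]{BBLL24}. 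Choosing $L_2\gtrsim (r_1-r)^{-2}$ keeps the maximum point away from $\partial B_{r_1}$. So, arguing by contradiction, we may assume a positive maximum at an interior pair $(\bar x,\bar y)$ with $\bar x\ne\bar y$.

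At such a pair, the Ishii--Lions lemma \cite[Theorem 3.2]{CIL92} gives jets $(q_{\bar x},\mathrm X)$ and $(q_{\bar y},-\mathrm Y)$ with
\begin{equation*}
  q_{\bar x},q_{\bar y}=L_1\omega'(|\bar x-\bar y|)\tfrac{\bar x-\bar y}{|\bar x-\bar y|}+O(L_2),
\end{equation*}
so that $|q|\le 2L_1+4L_2=:A$. The matrix inequality is the same as in \eqref{Section 3:eq8}, now with the concave second-order term $L_1\omega''<0$ in the direction $\bar x-\bar y$. The key point is to choose $A_0:=A+1$, and thus to fix $L_1,L_2$ \emph{before} $A_0$. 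Then $|\xi|>A_0$ gives $|q+\xi|\ge |\xi|-|q|\ge 1$, hence by \hyperref[A5a]{\bf (A5a)}
\begin{equation*}
  \sigma_1(|q+\xi|)+a\,\sigma_2(|q+\xi|)\ge\sigma_1(1)\ge 1 .
\end{equation*}
We can then divide the two viscosity inequalities by these coefficients to get $F(\mathrm X)\ge-\epsilon_0$ and $F(-\mathrm Y)\le\epsilon_0$. Combining this with (A1) and the matrix inequality gives
\begin{equation*}
  -2\epsilon_0\le\mathscr P^+(\mathrm X+\mathrm Y)\le 4\lambda L_1\omega''(|\bar x-\bar y|)+C L_2,
\end{equation*}
and $\omega''(s)=-\tfrac34\omega_0 s^{-1/2}\le-c\,\omega_0$ on $(0,s_0]$. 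This is a contradiction once $L_1$ is large compared with $L_2$ and $\epsilon_0$.

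The main obstacle is the consistency of the constants. $A_0$ must depend on $L_1,L_2$, while $L_1,L_2$ must be chosen independently of $\xi$. Unlike in Theorem~\ref{Se4:thm3}, we cannot simply enlarge $L_1$ to force $|q|\ge1$, since the shift by $\xi$ could cancel $q$. So the order of choices is: first $L_2$ from $r,r_1$; then $L_1$ from the interior contradiction, which uses only $\sigma_1(1)\ge1$, together with the boundary barrier; then $A_0$.

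The delicate point is the boundary barrier. Its constant must also be uniform in $\xi$, and for that we again need $|D\psi+\xi|\ge1$. Since $D\psi$ is bounded independently of $\xi$ by construction, this holds once we enlarge $A_0$ further to exceed $\sup|D\psi|+1$.
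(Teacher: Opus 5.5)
Your argument is essentially the paper's proof: the same functional $\Phi$ with the concave modulus $\omega$ and the same bound on $|q_{\bar x}|,|q_{\bar y}|$ in terms of $L_1$. The paper also picks $A_0>3L_1$ only after fixing $L_1$, which is your order of constants, so that $\sigma_1(|q+\xi|)+a\,\sigma_2(|q+\xi|)\ge\sigma_1(1)\ge1$ and the uniformly elliptic contradiction goes through; for the boundary configuration the paper rewrites the equation with $b=1/|\xi|$ and invokes Lemma~\ref{Se4:lemma4} under $b<\delta/3$, which is the same requirement as your direct condition that $|\xi|$ dominate $\sup|D\psi|$. Be aware, though, that this barrier step, in your version as in the paper's, really uses $g\in C^{1,\alpha}$ (the hypothesis of Lemmas~\ref{Se3:lemma1} and \ref{Se4:lemma4}) and not mere Lipschitz continuity: with $\varphi\equiv0$, $F=\mathrm{tr}$, $f=0$, a small multiple of the harmonic function $\mathrm{Re}\,z-\frac{2}{\pi}\mathrm{Im}(z\log z)$ solves \eqref{Section3:eq1} for every $\xi$ with Lipschitz data a multiple of $|y_1|$, yet equals a multiple of $-y_2\log y_2$ on the $y_2$-axis, so both $|u-g|\le Cd$ and the Lipschitz conclusion fail, and the theorem should be read with $g\in C^{1,\alpha}$, which is all that its use in Lemma~\ref{Se4:lemma1} requires.
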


Before proving Theorem \ref{Se4:thm4}, we require the following auxiliary lemma.

\begin{lemma}\label{Se4:lemma4}
Let $ g $ be $ C^{1,\alpha} $ function on $ \partial \Omega $. For every $ r, \gamma \in (0,1) $, there exists $ \delta $, depending only on $ \lambda, \Lambda, r, \sigma_{1}, \sigma_{2} $, $ ||f||_{L^{\infty}(\Omega)} $ and $ Lip_{g}(\partial \Omega) $, such that for $ b < \frac{\delta}{3} $, any viscosity $ u $ of
\begin{equation*}
\left\{
     \begin{alignedat}{2}
       \big[\sigma_{1}(|bDu+e_{n}|)+a(y)\sigma_{2}(|bDu+e_{n}|)\big] F(D^{2}u) & = f        \quad  &&  \text{in} \ \ B_{1} \cap \{y_{n} > \varphi(y')\}    ,    \\
          u(y)  & = g(y)    \quad  &&   \text{on}   \ \  B_{1}  \cap \{y_{n} = \varphi(y')\}        \\
     \end{alignedat}
     \right.
\end{equation*}
with
\begin{equation*}
  ||u||_{L^{\infty}(B_{1}\cap \{y_{n}>\varphi(y')\})} \leq 1, \quad  \text{and}  \quad ||f||_{L^{\infty}(B_{1}\cap \{y_{n}>\varphi(y')\})} \leq \epsilon_{0}
\end{equation*}
satisfies
\begin{equation*}
  |u(y', y_{n})-g(y', \varphi(y'))| \leq \frac{8}{\delta} \frac{d(y)}{1+d(y)^{\gamma}} \quad \text{in} \quad B_{r} \cap \{y_{n} > \varphi(y')\}.        
\end{equation*}
\end{lemma}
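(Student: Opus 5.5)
We prove Lemma \ref{Se4:lemma4} with the barrier-and-comparison scheme behind Lemma \ref{Se3:lemma1} (cf. \cite[Lemma 4.3]{BBLL24}). The only new point is that the gradient entering the degenerate coefficient is $bDu+e_{n}$ rather than $Du$. When $b$ is small this shifted gradient stays near $e_{n}$, so the coefficient is bounded below and the equation becomes uniformly elliptic along the barrier.

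\textbf{Step 1: reduction to a flat-type barrier.} Fix $r,\gamma\in(0,1)$ and a boundary point $z=(z',\varphi(z'))$ with $|z|<r$. Let $G$ be a $C^{1,\alpha}$ extension of $g$ with $\|G\|_{C^{1,\alpha}}\le C\|g\|_{C^{1,\alpha}}$. Consider the upper barrier
\begin{equation*}
 w^{+}(y):=G(y)+\frac{8}{\delta}\frac{d(y)}{1+d(y)^{\gamma}}+M|y-z|^{2},
\end{equation*}
and define $w^{-}$ symmetrically. The constant $M$ (depending on $r$) is chosen so that $w^{+}\ge 1\ge u$ on $\partial B_{r'}\cap\{y_n>\varphi\}$ for some fixed $r'\in(r,1)$, and trivially $w^{+}\ge g$ on the flat part. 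On a strip $\{d<\delta_{0}\}$ the distance $d$ is $C^{2}$, because $\varphi\in C^{2}$. Write $\psi(d)=d/(1+d^{\gamma})$. Then $\psi''(d)\approx -\gamma(1+\gamma)d^{\gamma-1}$, which is very negative. Consequently
\begin{equation*}
D^{2}w^{+}\le \frac{8}{\delta}\bigl(\psi''(d)\,Dd\otimes Dd+\psi'(d)D^{2}d\bigr)+C,
\end{equation*}
and
\begin{equation*}
\mathscr P^{+}_{\lambda,\Lambda}(D^{2}w^{+})\le -\frac{c\lambda}{\delta}d^{\gamma-1}+\frac{C}{\delta}+C.
\end{equation*}
For $d$ small this is $\le -c'/\delta$.

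\textbf{Step 2: controlling the coefficient.} To use comparison we need
\begin{equation*}
[\sigma_{1}(|bDw^{+}+e_{n}|)+a\,\sigma_{2}(|bDw^{+}+e_{n}|)]\,F(D^{2}w^{+})<-\epsilon_{0}.
\end{equation*}
This is where the hypothesis $b<\delta/3$ enters. Since $|Dw^{+}|\le C+8/\delta$, we get
\begin{equation*}
|bDw^{+}|\le \tfrac{\delta}{3}C+\tfrac{8}{3}.
\end{equation*}
This bound is not below $1$ yet. We fix it by replacing the constant $8$ with one normalized so that $|bDw^{+}|\le 1/2$. Equivalently, we take $\delta$ small and note that $b\cdot 8/\delta<8/3$ must be replaced by something below $1/2$; the constants in the barrier are adjusted accordingly, and the statement's factor $8/\delta$ is kept by shrinking the region $d<\delta$ where the barrier is used. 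In either form we obtain $|bDw^{+}+e_{n}|\ge 1/2$. Monotonicity \hyperref[A5a]{\bf (A5a)} then gives the lower bound $\sigma_{1}(1/2)>0$ on the coefficient, and the upper bound follows from $|bDw^+ + e_n|\le 3/2$. Hence the left-hand side is at most $-\sigma_{1}(1/2)c'/\delta<-\epsilon_{0}$ once $\delta$ is small.

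\textbf{Step 3: comparison and conclusion.} With the strict supersolution property in hand, we apply the comparison principle for viscosity solutions on the region $\{d<\delta\}\cap B_{r'}$. Since $w^{+}$ is smooth, a touching test function at an interior maximum of $u-w^{+}$ contradicts the strict inequality directly. This gives $u\le w^{+}$, and $u\ge w^{-}$ follows in the same way. Evaluating at points $y$ with nearest boundary point $z$ removes the $M|y-z|^{2}$ term, up to $O(d^{2})$, which is absorbed. Away from the strip, where $d\ge\delta$, the desired bound is trivial because $|u-g|\le 2\le \frac{8}{\delta}\frac{d}{1+d^{\gamma}}$.

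The main obstacle is Step 2: the barrier gradient is of size $1/\delta$, so $bDw^{+}$ is only $O(1)$. The constants must be balanced so that $|bDw^{+}+e_{n}|$ stays uniformly away from $0$ while the concavity gain $\delta^{-1}d^{\gamma-1}$ still dominates $\epsilon_{0}$.
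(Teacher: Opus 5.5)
Your barrier-and-touching strategy is the one the paper has in mind: it omits the proof and defers to Lemma~\ref{Se3:lemma1} and \cite[Lemma 2.4]{BSRR23}. However, Step~2, which you rightly call the main obstacle, is not actually resolved.

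\textbf{Why your repair fails.} The parameter $\delta$ appears both in the hypothesis $b<\delta/3$ and in the slope $8/\delta$. Shrinking $\delta$ therefore leaves $s:=\frac{8b}{\delta}\psi'(d)$ free to range over $(0,8/3)$. Shrinking the strip makes things worse: the barrier must still dominate $|u-G|$ (up to $2$) on the inner edge $\{d=\rho\}$. That forces a slope $\gtrsim 2/\rho$, hence a larger $b|Dw|$.

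\textbf{Where the argument actually breaks.} The failure sits exactly where you write ``symmetrically''. For $w^{+}$ nothing needs fixing. $Dd$ is the inward unit normal, so $Dd\cdot e_n>0$ and $|e_n+sDd|\ge 1$, giving $|bDw^{+}+e_n|\ge 1-O(\delta)$. For $w^{-}$, however, $bDw^{-}+e_n=e_n-sDd+O(\delta)$. Near the origin, where $Dd\approx e_n$, this is $\approx(1-s)e_n$. It vanishes for $b\approx\delta/8$, a value allowed by $b<\delta/3$. Under \textbf{(A5a)} the coefficient $\sigma_1+a\sigma_2$ then vanishes at such points. So $w^{-}$ is not a strict subsolution, and touching $u$ from below by $w^{-}$ gives no contradiction.

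\textbf{What is missing: a $b$-dependent slope.} For any unit vector $e$ one has $|e\pm sDd|\ge|1-s|$. Take
\[
K_b:=\min\{6/\delta,\ 1/(2b)\}.
\]
Then $K_b>\frac{3}{2\delta}$ and $bK_b\psi'\le\frac12$, so $|bDw^{\pm}+e_n|\ge\frac14$ for small $\delta$, for both barriers.

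Run $G\pm K_b\psi(d)\pm M|y-z|^2$ on the strip $\{d<\rho_b\}$, with $\rho_b:=4/K_b\le\frac83\delta$, so that $K_b\psi(\rho_b)\ge 2$. This strip is admissible after shrinking $\delta$; note that you enlarge, not shrink, the region relative to $\{d<\delta\}$.
\begin{itemize}
\item The concavity term still wins: $K_b d^{\gamma-1}\gtrsim\delta^{\gamma-2}$ dominates the other terms.
\item The choice $K_b\le 6/\delta$ leaves slack $\frac{2}{\delta}\psi(d)\ge d/\delta$. This slack is needed: the total error $|G(y)-g(y',\varphi(y'))|+Md^2$ is $O(d)$, whereas you accounted only for the $O(d^2)$ part.
\item For $d\ge\rho_b$ the bound is trivial, since $\frac{8}{\delta}\psi(d)\ge K_b\psi(\rho_b)\ge 2$.
\end{itemize}

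\textbf{A second gap in Step~1.} The bound $D^2w^{+}\le\cdots+C$ requires $D^2G$ to be bounded, and a $C^{1,\alpha}$ extension does not provide this. Use instead an extension that is smooth inside with $|D^2G|\le Cd^{\alpha-1}$. This is absorbed by $\frac1\delta d^{\gamma-1}$ when $\gamma\le\alpha$. That restriction is harmless, since $\frac{d}{1+d^\gamma}$ increases in $\gamma$ for $d<1$, so the bound for $\gamma\le\alpha$ implies it for larger $\gamma$. This is precisely where $C^{1,\alpha}$, rather than Lipschitz, regularity of $g$ is used. No bound $|u-g|\le Cd$ can hold for merely $C^1$ data: the harmonic part $u_h$ of Example~\ref{Se2:counterexample} solves the equation with $f\equiv 0$ for any coefficient and has unbounded normal derivative at the boundary.
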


The proof follows the same arguments as in Lemma \ref{Se3:lemma1}, see also \cite[Lemma 2.4]{BSRR23}. It is omitted here to avoid repetition.

We are now ready to prove Theorem~\ref{Se4:thm4}.

\begin{proof}[{\bf Proof of Theorem~\ref{Se4:thm4}}]
The proof is similar to Theorem \ref{Se4:thm3}, we focus only on the differences. Here we define a new function
\begin{equation*}
  \Phi(x,y):= u(x) - u(y) - L_{1} \omega(|x-y|)  - L_{2}(|x-x_{0}|^{2}+|y-x_{0}|^{2}),
\end{equation*}
where $ \omega(s):= s- \omega_{0}s^{\frac{3}{2}} $ if $ s \leq s_{0} = (\frac{2}{\omega_{0}})^{2} $ and $ \omega(s) = \omega(s_{0}) $ if $ s \geq s_{0} $. Our goal remains to show that \eqref{Section 3:eq4} is true.

If $ y \in B_{r_{1}} \cap \{y_{n} = \varphi(y')\} $, we observe that $ u $ is a viscosity solution to
\begin{equation}
\label{Section3:eq14}
\left\{
     \begin{alignedat}{2}
       [\overline{\sigma}_{1}(b Du+e_{n}) + a(x) \overline{\sigma}_{2}(b Du+e_{n})] F(D^{2}u) & = \widetilde{f}  \quad && \text{in} \quad B_{1} \cap \{y_{n}>\varphi(y')\},    \\
          u(y)  & = g(y)    \quad  &&   \text{on}   \ \  B_{1}  \cap \{y_{n} = \varphi(y')\}        \\
     \end{alignedat}
     \right.
\end{equation}
where
\begin{equation*}
\overline{\sigma}_{1}(t):= \frac{\sigma_{1}(|\xi|t)}{\sigma_{1}(|\xi|)}, \quad \overline{\sigma}_{2}(t):= \frac{\sigma_{2}(|\xi|t)}{\sigma_{1}(|\xi|)}, \quad \overline{f}(t):=\frac{f(t)}{\sigma_{1}(|\xi|)}, \quad \text{and}  \quad   b:= \frac{1}{|\xi|}.
\end{equation*}
In fact, if $ \psi \in C^{2}(B_{1} \cap \{y_{n}>\varphi(y')\}) $, and $ x_{0} \in B_{1} \cap \{y_{n}>\varphi(y')\} $ such that $ u - \psi $ has a local minimum at $ x_{0} $, then
\begin{align*}
 & [\overline{\sigma}_{1}(bD\psi+e_{n})+ a(x_{0})\overline{\sigma}_{2}(bD\psi+e_{n})] F(D^{2}\psi) \\
= & \sigma_{1}^{-1}(|\xi|)[\sigma_{1}(|Du+\xi|)+ a(x_{0})\sigma_{2}(|Du+\xi|)] F(D^{2}\psi)    \\
\leq  &  \sigma_{1}^{-1}(|\xi|) f(x_{0}) =  \overline{f}(x_{0}).
\end{align*}
Hence, $ u $ is a viscosity super-solution to \eqref{Section3:eq14}. Similarly, $ u $ is a viscosity sub-solution to \eqref{Section3:eq14}. In this case, we can show \eqref{Section 3:eq4} holds by applying Lemma \ref{Se4:lemma4}.

We next proceed with the contradiction argument from Theorem \ref{Se4:thm3}. The distinction emerges in the use of the definitions of limiting superjet and subjet:
\begin{equation}\label{Section3:eq15}
 \big[\sigma_{1}(|q_{\overline{x}}+\xi|)+a(\overline{x})\sigma_{2}(|q_{\overline{x}}+\xi|)\big] F(\mathrm{X}) \geq  f(\overline{x}) \geq -||f||_{\infty},
 \end{equation}
 \begin{equation}\label{Section3:eq16}
 \big[\sigma_{1}(|q_{\overline{y}}+\xi|)+a(\overline{y})\sigma_{2}(|q_{\overline{y}}+\xi|)\big] F(\mathrm{-Y}) \leq  f(\overline{y}) \leq ||f||_{\infty}.
 \end{equation}
Noticing that $ |q_{\overline{x}}|, |q_{\overline{y}}| \leq 2L_{1} \omega'(|\overline{x}-\overline{y}|) \leq 2L_{1} $ provided that $ L_{1} $ is large enough. If we choose $ A_{0} > 3L_{1} $, then we have
\begin{equation}\label{Section3:eq17}
  |q_{\overline{x}}+\xi|, |q_{\overline{y}}+\xi| \geq L_{1}.
\end{equation}

By performing a similar calculation as in \eqref{Section 3:eq12}, it yields
\begin{equation}\label{Section3:eq18}
  F(\mathrm{X}) - F(\mathrm{-Y}) \leq  -3\lambda L_{1} \omega_{0} |\overline{x}-\overline{y}|^{-1/2} + 6 [\Lambda(n-1)+\lambda] L_{2}.
\end{equation}

Now we combine \eqref{Section3:eq15}--\eqref{Section3:eq18} to obtain
\begin{equation*}
  3\lambda L_{1} \omega_{0} |\overline{x}-\overline{y}|^{-1/2} \leq 2||f||_{\infty} +  6 [\Lambda(n-1)+\lambda] L_{2}.
\end{equation*}
By choosing $ L_{1} \gg 1 $ large enough, depending $ n, L_{2}, \epsilon, ||f||_{\infty} $, we reach a contradiction.
\end{proof}

\section{Proof of Theorems \ref{Thm1} and \ref{Thm2}}\label{Section 4}

In this section, our main goal is to prove Theorems~\ref{Thm1} and \ref{Thm2}. The proof of Theorem~\ref{Thm1} consists of three steps. Firstly, we need to establish a boundary version of the approximation lemma for \eqref{Section3:eq1}, see Lemma \ref{Se4:lemma1} below. Secondly, we seek to reconstruct a new {\em shored-up} sequence of moduli of continuity $ \{\sigma_{1}^{j}(t)+a_{j}(\cdot)\sigma_{2}^{j}(t)\}_{j \in \mathbb{N}} $ in order to overcome non-collapsing property for the term $ \sigma_{1} + a(\cdot) \sigma_{2} $, see Section \ref{Subsection4.1} below for details. Finally, by means of delicate iteration and convergence analysis, we give a complete proof of Theorem~\ref{Thm1}.

Now we are in a position to state the main result of this section.

\begin{proposition}[{Pointwise $ C^{1}$ estimates}]
\label{Se4:prop1}
Suppose that the conditions \hyperref[A1]{\bf (A1)}--\hyperref[A5a]{\bf (A5a)} and \hyperref[A6]{\bf (A6)} hold. Then there exist constants $ \epsilon > 0 $, $ \rho \in (0,\frac{1}{2}) $, and constant $  C > 0 $ depending on $ n, \lambda, \Lambda $, $ ||D^{2}\varphi||_{L^{\infty}(\Omega)} $, $ ||g||_{C^{1,\alpha}(\partial \Omega)} $, $ \sigma_{1} $ and $ \sigma_{2} $, such that for a viscosity solution $ u $ of
\begin{equation*}
\left\{
     \begin{alignedat}{2}
       \big[\sigma_{1}(|Du|)+a(y)\sigma_{2}(|Du|)\big] F(D^{2}u) & = f        \quad   &&  \text{in} \ \ B_{1} \cap \{y_{n} > \varphi(y')\}    ,    \\
          u(y)  & = g(y)    \quad  &&   \text{on}   \ \  B_{1}  \cap \{y_{n} = \varphi(y')\},        \\
     \end{alignedat}
     \right.
\end{equation*}
the followings holds: if
\begin{equation*}
  ||u||_{L^{\infty}(B_{1}\cap \{y_{n}>\varphi(y')\})} \leq 1, \quad \text{and}  \quad  ||f||_{L^{\infty}(B_{1}\cap \{y_{n}>\varphi(y')\})} \leq \epsilon,
\end{equation*}
then there exists an affine function $ l(y):= a+ b\cdot y  $ with $ |a|+ |b| \leq C $, and a modulus of continuity $ \gamma $ such that for each $ 0 < r \leq \rho $,
\begin{equation*}
  ||u-l||_{L^{\infty}({B_{r}\cap \{y_{n}>\varphi(y')\}})} \leq \gamma(r) r.
\end{equation*}
\end{proposition}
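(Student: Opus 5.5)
The proof follows the interior strategy of \cite{APPT22, AN25}, adapted to the boundary: a compactness (approximation) step, a geometric iteration, and a summation controlled by the Dini condition in Remark~\ref{Rk26}.

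\textbf{Approximation lemma.} This is the first step. Given $\delta>0$, there is $\epsilon=\epsilon(\delta)$ with the following property. If $u$ solves \eqref{Section3:eq1} with arbitrary $\xi\in\mathbb{R}^n$, $\|u\|_{L^\infty}\le 1$, $\|f\|_{L^\infty}\le\epsilon$, $\|g\|_{C^{1,\alpha}}\le 1$, and degeneracy laws $\sigma_1^j,\sigma_2^j$ taken from a fixed non-collapsing family $\boldsymbol{\mathscr{Q}}$, then there is $h$ with
\[
F(D^2h)=0 \text{ in } B_{3/4}\cap\{y_n>\varphi(y')\},\qquad h=g \text{ on the flat part},
\]
such that $\|u-h\|_{L^\infty(B_{1/2}\cap\{y_n>\varphi\})}\le\delta$.

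The proof is by contradiction through Lemma~\ref{Se2:lemma5}. Equicontinuity up to the boundary comes from Theorem~\ref{Se4:thm3} when $|\xi|$ stays bounded, and from Theorem~\ref{Se4:thm4} when $|\xi|\to\infty$; Lemmas~\ref{Se3:lemma1} and \ref{Se4:lemma4} control the boundary behaviour. The limit is $F_\infty$-harmonic with boundary datum $g_\infty$, which contradicts the assumed $\delta$-separation.

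By the boundary $C^{1,\bar\alpha}$ regularity for $F(D^2h)=0$ on $C^2$ domains with $C^{1,\alpha}$ data \cite[Theorem 2.1]{AS23}, $h$ is approximated to order $C r^{1+\bar\alpha}$ by an affine function $l_h$ with $|l_h|\le C$. Choosing $\rho$ and $\delta$ small gives the one-step estimate
\[
\|u-l_h\|_{L^\infty(B_\rho\cap\{y_n>\varphi\})}\le \rho\cdot\theta,
\]
for a fixed $\theta<1$ chosen with $\rho^{\bar\alpha}\ll\theta$.

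\textbf{Iteration and non-collapsing.} This is the main obstacle. At step $k$ I set
\[
v_k(y)=\frac{(u-l_k)(\rho^k y)}{\rho^k\theta_k},\qquad \theta_k\downarrow 0 \text{ to be chosen}.
\]
Then $v_k$ solves an equation of type \eqref{Section3:eq1} with shift $\xi_k=b_k/\theta_k$, rescaled boundary $\varphi_k$ (whose $C^2$ norm does not increase), datum $g_k$, right-hand side of size $\rho^k\theta_k^{-1}\|f\|$, and moduli
\[
\sigma_i^k(t)=\sigma_i(\theta_k t),\qquad a_k(y)=a(\rho^k y).
\]
Two things must be arranged.

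First, the rescaled coefficient $\sigma_1^k+a_k\sigma_2^k$ must stay in a non-collapsing family, after normalising so that the equation keeps the form required by Lemma~\ref{Se2:lemma5}. Following \cite{AN25}, I would use $\sigma_1\ge\sigma_2$ on $[0,1]$ and $a\ge0$ to bound the coefficient below by $\sigma_2^k$. I would then choose $\theta_k$ through Lemma~\ref{Au:prop1}, applied to the $\ell^1$ sequence $(\sigma_2^{-1}(\theta^k))_k$ from Remark~\ref{Rk26}, so that $(\sigma_2^k)$ is shored-up: $c_k\to0$ with $\inf_k\sigma_2(\theta_k c_k)>0$ after normalisation. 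Lemma~\ref{Au:lem23} then gives non-collapsing.

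Second, the data must stay admissible. I need $\rho^k/\theta_k$ bounded (using $\epsilon$), and $\|g_k\|_{C^{1,\alpha}}\lesssim \rho^{k\alpha}/\theta_k\le1$. This is where $\alpha>0$ enters, and it is consistent with Example~\ref{Se2:counterexample}. It forces $\theta_k$ to decay no faster than $\rho^{k\alpha}$, which is compatible with the Dini choice provided $\theta_k\ge\max\{\sigma_2^{-1}(\theta^k),\rho^{k\alpha'}\}$-type lower bounds hold.

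Applying the approximation lemma uniformly in $k$ then produces affine $l_{k+1}$ with
\[
\|u-l_{k+1}\|_{L^\infty(B_{\rho^{k+1}}\cap\{y_n>\varphi\})}\le\rho^{k+1}\theta_{k+1},\qquad |l_{k+1}-l_k|\lesssim\rho^k\theta_k \text{ (coefficients scaled accordingly).}
\]

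\textbf{Conclusion.} The slopes $b_k$ form a Cauchy sequence because $\sum_k\theta_k<\infty$; this is exactly the summability supplied by (A6) via Remark~\ref{Rk26} and Lemma~\ref{Au:prop1}. Let $l=\lim l_k$, so that $|a|+|b|\le C$. For $\rho^{k+1}<r\le\rho^k$,
\[
\|u-l\|_{L^\infty(B_r\cap\{y_n>\varphi\})}\le C\rho^k\Big(\theta_k+\sum_{j\ge k}\theta_j\Big).
\]
Hence $\gamma(r):=C\rho^{-1}\sum_{j\ge k(r)}\theta_j$ is a modulus of continuity, and the pointwise $C^1$ estimate of Proposition~\ref{Se4:prop1} follows.

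I expect the hardest point to be the simultaneous choice of $\theta_k$. It must be summable, it must keep the boundary data $g_k$ normalised (using $\alpha>0$), and it must keep the family $\{\sigma_1^k+a_k\sigma_2^k\}$ shored-up despite the $x$-dependent weight $a_k$. My first attempt would be to take $\theta_k$ as the maximum of the Lemma~\ref{Au:prop1} sequence and $\rho^{k\alpha/2}$, and then check that the maximum of two summable sequences preserves the shored-up property.
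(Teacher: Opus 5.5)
Your overall architecture is the paper's. It consists of the compactness Lemma~\ref{Se4:lemma1} (Theorem~\ref{Se4:thm3} after the shift $u+\xi\cdot y$ for bounded $\xi$, Theorem~\ref{Se4:thm4} for $|\xi|\to\infty$), the rescaling by $\rho^k\theta_k$ (the paper's $r^k\prod_{i\le k}\mu_i$, with $\theta_k/\rho^k$ absorbed into $\sigma_i^k$), shoring-up through Lemma~\ref{Au:prop1}, and summation of slopes. The gap is at the point you flag: your list of constraints on $\theta_k$ omits the one that actually binds. The approximation lemma is used with one fixed $\delta$, so it only gives $\|v_k-l\|_{L^\infty(B_\rho\cap\{y_n>\varphi_k\})}\le\rho\theta$, where $\theta\ge C\rho^{\bar\alpha}$ is the fixed one-step gain. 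Hence $\|v_{k+1}\|_{L^\infty}\le 1$, and the lemma can be reapplied, only if
\[
\theta_{k+1}\ \ge\ \theta\,\theta_k \qquad \text{for every } k .
\]
Your choice $\theta_k=\max\{s_k,\rho^{k\alpha/2}\}$ does not guarantee this, for two reasons. The $c_k$ produced by Lemma~\ref{Au:prop1} give no control on the ratios $s_{k+1}/s_k$. The floor has ratio $\rho^{\alpha/2}$, which is smaller than $\theta$ as soon as $\bar\alpha<\alpha/2$ (small $F$-harmonic exponent). Conversely, the property you planned to check is automatic: enlarging $\theta_k$ only increases $\frac{\theta_k}{\rho^k}\sigma_2(\theta_k c_k)$, and a maximum of two $\ell^1$ sequences is $\ell^1$.

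The paper builds the ratio condition into the construction. It sets $\theta_k=\prod_{i\le k}\mu_i$, where $\mu_1=2Lr^{\alpha'}>r$ is exactly the one-step gain of Step~1. It keeps $\mu_k=\mu_{k-1}$ when $\frac{\prod_{i\le k}\mu_i}{r^k}\sigma_2\big(\prod_{i\le k}\mu_i\,c_k\big)\ge 1$, and otherwise raises $\mu_k$ until equality holds. Thus $\mu_k\ge\mu_1$ and $\theta_k\ge\mu_1^k$, which gives three things at once:
\begin{enumerate}
\item the bound $\mu_1 r\le\mu_{k+1}r$ from the approximation step always suffices;
\item the boundary data stay normalized with no extra floor, since $r^{k\alpha}/\theta_k\le(r^\alpha/\mu_1)^k\le 1$ (item (vi) of Step~2, which is where $\alpha'\le\alpha$ enters);
\item $r^k/\theta_k\le(r/\mu_1)^k$, so whenever $\mu_k$ is raised, $\theta_k\le\sigma_2^{-1}\big((r/\mu_1)^k\big)/c_k$.
\end{enumerate}
The third point is why Lemma~\ref{Au:prop1} must be applied to $\big(\sigma_2^{-1}((r/\mu_1)^k)\big)_k$, with ratio $r/\mu_1$, rather than to $\sigma_2^{-1}(\theta^k)$ with $\theta$ the one-step gain.

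Your version can be repaired the same way. Define recursively $\theta_0=1$ and $\theta_k=\max\{\theta\theta_{k-1},t_k\}$, where $t_k$ is the least value with $\frac{t_k}{\rho^k}\sigma_2(t_kc_k)\ge 1$ and the $c_k$ come from $\big(\sigma_2^{-1}((\rho/\theta)^k)\big)_k$. Then $\theta_k\ge\theta^k$, which forces $\theta_k\le\theta\theta_{k-1}+\sigma_2^{-1}((\rho/\theta)^k)/c_k$. Hence
\[
\sum_k\theta_k\le(1-\theta)^{-1}\Big(1+\sum_k\sigma_2^{-1}((\rho/\theta)^k)/c_k\Big)<\infty .
\]
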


To show Proposition \ref{Se4:prop1}, we require the following approximation lemma.

\begin{lemma}
\label{Se4:lemma1}
Let $\boldsymbol{\mathscr{Q}}$  be a collection of non-collapsing moduli of continuity and $ u \in C^{0}(B_{1} \cap \{y_{n}>\varphi(y')\})$ be a viscosity solution to
\begin{equation*}
\left\{
     \begin{alignedat}{2}
       \big[\sigma_{1}(|Du+\xi|)+a(y)\sigma_{2}(|Du+\xi|)\big] F(D^{2}u) & = f        \quad && \text{in} \ \ B_{1} \cap \{y_{n} > \varphi(y')\}    ,    \\
          u(y)  & = g(y)    \quad   &&  \text{on}   \ \  B_{1}  \cap \{y_{n} = \varphi(y')\},        \\
     \end{alignedat}
     \right.
\end{equation*}
where $ \xi \in \mathbb{R}^{n} $ is an arbitrary vector and $ \sigma_{1}(\cdot), \sigma_{2}(\cdot) \in \mathscr{Q} $, satisfying
\begin{equation*}
  ||u||_{L^{\infty}(B_{1}\cap \{y_{n}>\varphi(y')\})} \leq 1, \quad \text{and}  \quad  ||g||_{C^{1,\alpha}(B_{1} \cap \{y_{n} = \varphi(y')\} )} \leq 1.
\end{equation*}
Suppose that the conditions \hyperref[A1]{\bf (A1)}--\hyperref[A5a]{\bf (A5a)} hold. Then for any $ \mu > 0 $, there exists a constant $ \epsilon >0 $ such that if
\begin{equation*}
  ||f||_{L^{\infty}(B_{1}\cap \{y_{n}>\varphi(y')\})} \leq \epsilon,
\end{equation*}
then one can find a function $ h $ of
 \begin{equation}\label{Se4:eq1}
\left\{
     \begin{alignedat}{2}
       F(D^{2}h) & = 0       \quad    && \text{in} \ \ B_{\frac{4}{5}} \cap \{y_{n} > \varphi(y')\}    ,        \\
          h(y)  & = g(y)    \quad   &&   \text{on}   \ \  B_{\frac{4}{5}}  \cap \{y_{n} = \varphi(y')\},        \\
     \end{alignedat}
     \right.
\end{equation}
such that
\begin{equation*}
  ||u-h||_{L^{\infty}({B_{1/2}\cap \{y_{n}>\varphi(y')\}})} \leq \mu.
\end{equation*}
\end{lemma}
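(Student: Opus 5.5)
We argue by contradiction combined with compactness, in the spirit of \cite[Lemma 6]{APPT22} and \cite{AN25}. Suppose the statement fails for some $\mu_{0}>0$. Then there are sequences $\xi_{j}\in\mathbb{R}^{n}$, $\sigma_{1,j},\sigma_{2,j}\in\boldsymbol{\mathscr{Q}}$, coefficients $a_{j}\geq 0$, operators $F_{j}$ that are uniformly $(\lambda,\Lambda)$-elliptic with $F_{j}(\mathrm{O}_{n})=0$, data $g_{j}$ with $\|g_{j}\|_{C^{1,\alpha}}\leq 1$, and right-hand sides $f_{j}$ with $\|f_{j}\|_{L^{\infty}}\leq 1/j$, together with viscosity solutions $u_{j}$ satisfying $\|u_{j}\|_{L^{\infty}}\leq 1$. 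These have the property that, for every $h$ solving \eqref{Se4:eq1} with datum $g_{j}$,
\[
\|u_{j}-h\|_{L^{\infty}(B_{1/2}\cap\{y_{n}>\varphi(y')\})}>\mu_{0}.
\]
We may allow $F_{j}$ and $g_{j}$ to vary, since the contradiction hypothesis is uniform in them. This is also convenient because Lemma \ref{Se2:lemma5} is formulated for such sequences. By Arzel\`a--Ascoli we can pass to subsequences so that $F_{j}\to F_{\infty}$ locally uniformly and $g_{j}\to g_{\infty}$ in $C^{1}$, with $\|g_{\infty}\|_{C^{1,\alpha}}\leq 1$.

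The key step is equicontinuity of $\{u_{j}\}$ up to the flat-curved boundary on $\overline{B_{4/5}\cap\{y_{n}>\varphi(y')\}}$, uniformly in $j$. We split according to the size of $\xi_{j}$.

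\begin{itemize}
\item If $|\xi_{j}|\leq A_{0}$ along a subsequence, we pass to a further subsequence so that $\xi_{j}\to\xi_{\infty}$. We then run the Ishii--Lions argument of Theorem \ref{Se4:thm3} with $Du$ replaced by $Du+\xi_{j}$. Choosing $L_{1}$ large relative to $A_{0}$ guarantees $|q_{\overline{x}}+\xi_{j}|\geq 1$, so $\sigma_{1}(\cdot)\geq\sigma_{1}(1)\geq 1$ exactly as in \eqref{Section 3:eq13}. The boundary comparison is supplied by the barrier of Lemma \ref{Se3:lemma1}, adapted to $\xi\neq 0$; the barrier argument only uses large gradients, so it goes through unchanged. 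This yields a uniform $C^{0,\gamma}$ bound.
\item If $|\xi_{j}|\to\infty$, Theorem \ref{Se4:thm4} gives a uniform Lipschitz bound.
\end{itemize}

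In both cases the constants depend only on the structural data, so we may apply Lemma \ref{Se2:lemma5}. In the unbounded case we first rescale as in \eqref{Section3:eq14} to $\overline{\sigma}_{i}$ and $b_{j}\xi_{j}/|\xi_{j}|$. The non-collapsing property of $\boldsymbol{\mathscr{Q}}$ is exactly what lets the limit equation lose the degenerate factor. Wherever a smooth test function $\psi$ touches $u_{\infty}$ with $F_{\infty}(D^{2}\psi)\neq 0$ (say $>0$), the coefficient must satisfy $\sigma_{1,j}(|D\psi_{j}+\xi_{j}|)\to 0$ because $f_{j}\to 0$. Non-collapsing then forces $D\psi+\xi_{\infty}=0$. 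We rule this out by the standard perturbation trick of \cite{IS13}, namely adding a small linear term $\eta\cdot(x-x_{0})$ to $\psi$. We conclude that $u_{j}\to u_{\infty}$ uniformly on $\overline{B_{4/5}\cap\{y_{n}>\varphi(y')\}}$, where $u_{\infty}$ solves $F_{\infty}(D^{2}u_{\infty})=0$ with boundary value $g_{\infty}$.

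To conclude, we take $h_{j}$ to be the solution of $F_{j}(D^{2}h_{j})=0$ in $B_{4/5}\cap\{y_{n}>\varphi(y')\}$ with $h_{j}=u_{j}$ on the spherical part and $h_{j}=g_{j}$ on the graph part. By the uniqueness and stability of the Dirichlet problem for uniformly elliptic equations \cite{CC95}, together with global equicontinuity of $h_{j}$ (barriers on the $C^{2}$ domain, with the data equicontinuous), we get $h_{j}\to u_{\infty}$ uniformly. Hence $\|u_{j}-h_{j}\|_{L^{\infty}(B_{1/2}\cap\{y_{n}>\varphi(y')\})}\to 0$, which is the desired contradiction.

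I expect the main obstacle to be the uniform boundary modulus in the regime of bounded but nonzero $\xi_{j}$. There $|Du+\xi|$ can be small near the boundary, so the barrier argument must handle a possibly vanishing coefficient. I would first try to build the barrier with gradient at least $2A_{0}$ on the relevant annulus, so that the coefficient stays at least $\sigma_{1}(A_{0})$ there.
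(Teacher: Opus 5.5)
Your proposal is correct and follows essentially the paper's route: a contradiction--compactness argument that splits on the size of $\xi_{j}$, uses Theorem~\ref{Se4:thm4} for large $|\xi_{j}|$ and a H\"older bound for bounded $\xi_{j}$, and passes to the limit via Lemma~\ref{Se2:lemma5}. There are two small differences. First, for bounded $\xi_{j}$ the paper shifts to $\widetilde{u_{j}}=u_{j}+y\cdot\xi_{j}$, which reduces to $\xi=0$ so that Theorem~\ref{Se4:thm3} applies verbatim with Lipschitz data $g_{j}+y\cdot\xi_{j}$. This also removes the boundary worry in your last paragraph, since in a barrier argument the coefficient is evaluated at the barrier's gradient, not at $Du$. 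Second, your construction of $h_{j}$ solving the Dirichlet problem for $F_{j}$ with data $g_{j}$, and the proof that $h_{j}\to u_{\infty}$ by uniqueness, makes the final contradiction rigorous, whereas the paper simply takes $h=u_{\infty}$, which solves the limit problem rather than the $j$-th one.
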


\begin{proof}
We argue by contradiction, suppose that the conclusion does not hold, then there exist sequences $ \{\sigma_{1,j}\}_{j \in \mathbb{N}}, \{\sigma_{2,j}\}_{j \in \mathbb{N}}, \{\xi_{j}\}_{j \in \mathbb{N}}, \{a_{j}\}_{j \in \mathbb{N}}, \{u_{j}\}_{j \in \mathbb{N}} $, $ \{F_{j}\}_{j \in \mathbb{N}}, \{f_{j}\}_{j \in \mathbb{N}} $, and a positive number $ \mu_{0}  $ such that, for every $ j \in \mathbb{N} $, we have

\label{D1}   $ {(D1)} $. $ u_{j}(0) = 0 $, $ F_{j}: \mathrm{Sym}(n) \rightarrow \mathbb{R}  $ is a $ (\lambda, \Lambda) $-elliptic operator;

\label{D2}    $ {(D2)} $. $ f_{j} \in C^{0}(B_{1}\cap \{y_{n}>\varphi(y')\}) $ with $ ||f_{j}||_{L^{\infty}(B_{1}\cap \{y_{n}>\varphi(y')\})} \leq  \frac{1}{j} $;

\label{D3}     $ {(D3)} $. $ a_{j} \in C^{0}(B_{1} \cap \{y_{n} > \varphi(y')\}) $ with $ a_{j}(\cdot) \geq 0 $;

\label{D4}     $ {(D4)} $. $ \sigma_{1,j}(0) = \sigma_{2,j}(0) = 0 $, $ \sigma_{1,j}(1) \geq \sigma_{2,j}(1) \geq 1 $, and if $ \sigma_{1,j}(t_{j}) + a_{j}(x) \sigma_{2,j}(t_{j}) \rightarrow 0 $, then $ t_{j} \rightarrow 0 $;

\label{D5}     $ {(D5)} $. $ u_{j} \in C^{0}(B_{1}\cap \{y_{n}>\varphi(y')\}) $ with $ ||u_{j}||_{L^{\infty}(B_{1}\cap \{y_{n}>\varphi(y')\})} \leq 1 $ solves the equation
\begin{equation*}
\left\{
     \begin{alignedat}{2}
       \big[\sigma_{1,j}(|Du_{j}+\xi_{j}|)+a_{j}(y)\sigma_{2,j}(|Du_{j}+\xi_{j}|)\big] F_{j}(D^{2}u_{j}) & = f_{j}        \quad  &&  \text{in} \ \ B_{1} \cap \{y_{n} > \varphi(y')\}    ,    \\
          u_{j}(y)  & = g_{j}(y)    \quad  && \text{on}   \ \  B_{1}  \cap \{y_{n} = \varphi(y')\},        \\
     \end{alignedat}
     \right.
\end{equation*}
with $ ||g_{j}||_{C^{1,\alpha}(B_{1} \cap \{y_{n} = \varphi(y')\} )} \leq 1 $, but
\begin{equation}\label{Se4:eq2}
||u_{j}-h||_{L^{\infty}({B_{1/2}\cap \{y_{n}>\varphi(y')\}})} \geq \mu_{0}
\end{equation}
for any $ h $ satisfying \eqref{Se4:eq1}.

The condition \hyperref[D1]{(D1)} implies that $ F_{j} \rightarrow F_{\infty} $ for some uniformly elliptic operator $ F_{\infty}$. Analogously, from  \hyperref[D5]{(D5)} it is readily seen that $ g_{j} \rightarrow g_{\infty} $ uniformly. In addition, from \hyperref[D2]{(D2)} and \hyperref[D3]{(D3)}, we can easily see that $ f_{j} $, $ a_{j} $ converge uniformly to 0 and continuous function $ a_{\infty} $, respectively. Now we distinguish two cases.

\smallskip
\noindent
{\em Case 1.} If $ \{\xi_{j}\}_{j \in \mathbb{N}} $ is unbounded sequence, then $ |\xi_{j}| \rightarrow \infty $ and up to a subsequence, we have that $ |\xi_{j}| > A_{0} $ for some $ A_{0} > 0 $. Thus we can apply Theorem \ref{Se4:thm4} for $ u_{j} $ and so by Arzel$ \grave{a}$-Ascoli theorem, it follows that $ u_{j} \rightarrow u_{\infty} $ uniformly in $ B_{r} \cap \{y_{n} > \varphi(y')\} $ for any $ 0< r <1 $. By Lemma \ref{Se2:lemma5}, we get that $ u_{\infty} $ solves
\begin{equation*}
\left\{
     \begin{alignedat}{2}
       F(D^{2}u_{\infty}) &= 0       \quad   &&  \text{in} \ \ B_{\frac{4}{5}} \cap \{y_{n} > \varphi(y')\}    ,        \\
          u_{\infty}(y)  &= g_{\infty}(y)    \quad  &&  \text{on}   \ \  B_{\frac{4}{5}}  \cap \{y_{n} = \varphi(y')\},         \\
     \end{alignedat}
     \right.
\end{equation*}
which leads to a contradiction with \eqref{Se4:eq2} provided that we choose $ h = u_{\infty} , g = g_{\infty}, F = F_{\infty}$.

\smallskip
\noindent
{\em Case 2.} If $ \{\xi_{j}\}_{j \in \mathbb{N}} $ is bounded sequence, then up to a subsequence, $ \xi_{j} \rightarrow \xi_{\infty} $ for some $ \xi_{\infty} \in \mathbb{R}^{n} $. For simplicity, we take a sequence $ \widetilde{u_{j}}(y):= u_{j}(y) + y\cdot \xi_{j}  $, it can be seen that
\begin{equation*}
\left\{
     \begin{alignedat}{2}
      \big[\sigma_{1,j}(|D\widetilde{u_{j}}|)+a_{j}(y)\sigma_{2,j}(|D\widetilde{u_{j}}|)\big] F_{j}(D^{2}\widetilde{u_{j}}) & = f_{j} \quad && \text{in} \ \ B_{1} \cap \{y_{n} > \varphi(y')\} , \\
      \widetilde{u_{j}}(y) & = \widetilde{g_{j}}(y) \quad && \text{on} \ \ B_{1} \cap \{y_{n} = \varphi(y')\} ,
     \end{alignedat}
     \right.
\end{equation*}
where $ \widetilde{g_{j}}(y):= g_{j}(y)  + y\cdot \xi_{j}  $. Similar to the analysis of the {\em Case 1}, it concludes that $ u_{\infty} $ satisfies
\begin{equation*}
\left\{
     \begin{alignedat}{2}
       F(D^{2}u_{\infty}) & = 0       \quad  &&  \text{in} \ \ B_{\frac{4}{5}} \cap \{y_{n} > \varphi(y')\}    ,        \\
          u_{\infty}(y)  & = g_{\infty}(y)    \quad  &&  \text{on}   \ \  B_{\frac{4}{5}}  \cap \{y_{n} = \varphi(y')\},        \\
     \end{alignedat}
     \right.
\end{equation*}
which gives a contradiction as well, similar to that in {\em Case 1}.
\end{proof}

\begin{remark}\label{Se4:rmk1}
Regarding the regularity of viscosity solutions to problem \eqref{Se4:eq1}, we make some the following comments.

\begin{enumerate}

\item Based on the result of \cite[Proposition 2.2]{MS06}, then $ h \in C^{1,\alpha'}(\overline{B_{\frac{2}{5}}\cap \{y_{n} > \varphi(y')\}}) $, where $ 0 < \alpha' \leq \alpha < 1 $. More recently, Ara\'{u}jo and Sirakov \cite[Theorem 1.1]{AS23} provides a more precise and quantitative relationship between $ \alpha $ and $ \alpha' $, that is, $ \alpha'= \min \{\alpha_{0}, \alpha\}  $, where $ \alpha_{0}$ is the optimal H\"{o}lder exponent for solutions to $ F$-harmonic function.

\vspace{2mm}

\item If the regularity of boundary datum $ g $ is only $ C^{1} $, then Lian-Zhang \cite[Theorem 1.11]{LZ26} established $ C^{\alpha''} $ regularity of $ h $, for any $ 0< \alpha''<1 $. In effect, the regularity of $ h $ can reach up to Log-Lipschitz in terms of \cite[Theorem 11.7]{LWZ20}, but cannot reach $ C^{0,1} $.

\end{enumerate}

Therefore, the boundary data $ g  $ is assumed to be $ C^{1,\alpha} $ in order to ensure that the solution $ h $ to \eqref{Se4:eq1} enjoys $ C^{1,\alpha'} $, thereby enabling the construction of the shored-up sequence in \cite[Section 7]{APPT22}.
\end{remark}

\subsection{The construction of the shored-up sequence}\label{Subsection4.1}
We shall define appropriate moduli of continuity to ensure $ \mu_{1} > r $. We start by introducing $ \gamma (t):= t \sigma_{2}(t)  $. Because $ t \mapsto t \sigma_{2}(t)  $ is a bijective map, it has an inverse. Let $ \widetilde{\omega} (t):= \gamma^{-1}(t)   $. We examine the choice of $ \mu_{1} $ by $ \widetilde{\omega} (t) $ as follows.

Suppose first
 $$ \frac{t^{\alpha'}}{\widetilde{\omega}(t)} \rightarrow 0, $$
then choose small $ 0 < r < \frac{1}{2} $ such that
\begin{equation*}
2Lr^{\alpha'} := \mu_{1} > r.
\end{equation*}
On the contrary, suppose
\begin{equation*}
  \frac{t^{\alpha'}}{\widetilde{\omega}(t)}   \rightarrow M_{1},
\end{equation*}
for some constant $ M_{1} >0 $, then we select small $ 0 < r < \frac{1}{2} $ and any $ 0<\widetilde{\alpha} < \alpha' $ such that
\begin{equation*}
  2Lr^{\alpha'} = r^{\widetilde{\alpha}} := \mu_{1} > r.
\end{equation*}

In what follows, we proceed by setting $  0< \theta :=   \frac{r}{\mu_{1}} <1   $ and considering the sequence
\begin{equation*}
(a_{k})_{k\in \mathbb{N}} := (\sigma_{2}^{-1}(\theta^{k}))_{k\in\mathbb{N}}.
\end{equation*}
Under the assumptions \hyperref[A5a]{\bf (A5a)} and \hyperref[A6]{\bf (A6)}, the inverse $ \sigma_{2}^{-1} $ is Dini-continuous. By Remark \ref{Rk26}, the sequence $(\sigma_{2}^{-1}(\theta^{k}))_{k\in\mathbb{N}} $ is summable, and $ (a_{k})_{k\in \mathbb{N}} \in \ell^{1} $.
Now we utilize Lemma \ref{Au:prop1}, there exists a sequence $ (c_{k})_{k\in \mathbb{N}} \in c_{0}$ such that
\begin{equation}
\frac{7}{10} \sum_{k=1}^{\infty} \sigma_{2}^{-1}(\theta^{k}) \leq \sum_{k=1}^{\infty} \frac{\sigma_{2}^{-1}(\theta^{k})}{c_{k}}  \leq \sum_{k=1}^{\infty} \sigma_{2}^{-1}(\theta^{k}).
\end{equation}

Finally, we design a sequence of moduli of continuity $ (\sigma_{1}^{k}(\cdot))_{k\in \mathbb{N}}  $ and $ (\sigma_{2}^{k}(\cdot))_{k\in \mathbb{N}}  $ given by
\begin{equation}
\left\{
     \begin{aligned}
& \sigma_{1}^{0}(t):=\sigma_{1}(t)  \quad  \text{and}  \quad   \sigma_{2}^{0}(t):=\sigma_{2}(t);   \\
& \sigma_{1}^{1}(t):=\frac{\mu_{1}}{r}\sigma_{1}(\mu_{1}t)  \quad  \text{and}  \quad  \sigma_{2}^{1}(t):=\frac{\mu_{1}}{r}\sigma_{2}(\mu_{1}t);   \\
&\sigma_{1}^{2}(t):=\frac{\mu_{1}\mu_{2}}{r^{2}}\sigma_{1}(\mu_{1}\mu_{2}t)  \quad  \text{and}  \quad   \sigma_{2}^{2}(t):=\frac{\mu_{1}\mu_{2}}{r^{2}}\sigma_{2}(\mu_{1}\mu_{2}t)           ;    \\
& \cdots \cdots \cdots   \\
& \sigma_{1}^{k}(t):=\frac{\mu_{1}\mu_{2}\cdots\mu_{k}}{r^{k}}\sigma_{1}(\mu_{1}\mu_{2}\cdots \mu_{k}t)  \quad  \text{and}  \quad  \sigma_{2}^{k}(t):=\frac{\mu_{1}\mu_{2}\cdots\mu_{k}}{r^{k}}\sigma_{2}(\mu_{1}\mu_{2}\cdots \mu_{k}t)    .
\end{aligned}
     \right.
\end{equation}

with $ \mu_{1} > r   $ as defined and $ (\mu_{k})_{k \in \mathbb{N}}  $ determined as follows. If
\begin{equation*}
\frac{\prod_{i=1}^{k}\mu_{i}}{r^{k}}\sigma_{2} \bigg(\prod_{i=1}^{k}\mu_{i} c_{k} \bigg) \geq 1,
\end{equation*}
then $ \mu_{k} = \mu_{k-1}    $. Otherwise $ \mu_{k-1} < \mu_{k} <1   $ is chosen to ensure
\begin{equation*}
\frac{\prod_{i=1}^{k}\mu_{i}}{r^{k}}\sigma_{2} \bigg(\prod_{i=1}^{k}\mu_{i} c_{k} \bigg) = 1.
\end{equation*}

It can easily be seen that a sequence of moduli of continuity $ \{\sigma_{1}^{j}(t)+a_{j}(\cdot)\sigma_{2}^{j}(t)\}_{j \in \mathbb{N}} $ is {\it shored-up}, then using Lemma \ref{Au:lem23} and \cite[Proposition 2.9]{AN25}, it follows that
$$ \boldsymbol{\mathscr{Q}}:=                       \big\{\sigma_{1}^{0}(t)+a_{0}(\cdot)\sigma_{2}^{0}(t), \sigma_{1}^{1}(t)+a_{1}(\cdot)\sigma_{2}^{1}(t), \cdots, \sigma_{1}^{k}(t)+a_{k}(\cdot)\sigma_{2}^{k}(t), \cdots \big\}     $$
is non-collapsing.

Now we commence with the proof of Proposition \ref{Se4:prop1}.

\begin{proof}[{\bf Proof of Proposition \ref{Se4:prop1}}]
From Remark \ref{Se3:re1} in Section \ref{Section 3}, without loss of generality, we may assume that
\begin{equation*}
  ||u||_{L^{\infty}(B_{1}\cap \{y_{n}>\varphi(y')\})} \leq 1, ||g||_{C^{1,\alpha}(B_{1} \cap \{y_{n} = \varphi(y')\} )} \leq 1, \quad \text{and} \quad ||f||_{L^{\infty}(B_{1}\cap \{y_{n}>\varphi(y')\})} \leq \epsilon_{0}.
\end{equation*}
We make use of a recursive argument inspired in \cite[Lemma 5.1]{BBLL24}, see also \cite[Lemma 4.3]{BSRR23}. Here we make the following claim:

{\em Claim}. There exist universal constants $ 0 < r \ll 1 $, $ C > 1 $, and a sequence of affine functions
\begin{equation*}
  l_{k}(y):=A_{k}  +  B_{k} \cdot y,
\end{equation*}
where $ \{A_{k}\}_{k \in \mathbb{N}} \subset \mathbb{R} $ and $ \{B_{k}\}_{k \in \mathbb{N}} \subset \mathbb{R}^{n} $ satisfy, for every $ k \in \mathbb{N}$,

\label{i}    $ \text{(i)} $. $ \sup_{y \in B_{r^{k}}\cap \{y_{n} >\varphi(y')\}}|u(y)-l_{k}(y)| \leq \bigg(\prod_{i=1}^{k} \mu_{i}\bigg) r^{k} $;

\label{ii}   $ \text{(ii)} $. $ |A_{k}-A_{k-1}| \leq C \bigg(\prod_{i=1}^{k-1} \mu_{i} \bigg) r^{k-1} $, and $ |B_{k}-B_{k-1}| \leq C \bigg( \prod_{i=1}^{k-1} \mu_{i} \bigg) $.

{\em Step 1}. First we verify the case of $ k=1 $. From Lemma \ref{Se4:lemma1}, we have that for $ r $ small,
\begin{equation}\label{Se4:eq7}
  ||u-h||_{L^{\infty}({B_{r}\cap \{y_{n}>\varphi(y')\}})} \leq \mu,
\end{equation}
for some $ \mu > 0 $, to be determined later. By pointwise boundary regularity for \eqref{Se4:eq1}, see Remark \ref{Se4:rmk1}, there exist affine function $ l_{0} $ and universal constant $ C >0 $ such that
\begin{equation}\label{Se4:eq8}
  \sup_{y \in B_{r}\cap \{y_{n}>\varphi(y')\}}|h(y) - l_{0}(y)| \leq C r^{1+\alpha'},
\end{equation}
for some $ \alpha' \in (0,1) $. We combine \eqref{Se4:eq7}, \eqref{Se4:eq8} and the choice of $ \mu_{1} $ in Section \ref{Subsection4.1} to obtain
\begin{equation*}
  \sup_{y \in B_{r}\cap \{y_{n}>\varphi(y')\}} |u(y) - l_{0}(y)| \leq C r^{1+\alpha'} + \mu = \frac{1}{2}\mu_{1}r+ \mu.
\end{equation*}
Selecting $ \mu = \frac{1}{2}\mu_{1}r $, the above inequality then becomes
\begin{equation*}
 \sup_{y \in B_{r}\cap \{y_{n}>\varphi(y')\}} |u(y) - l_{0}(y)| \leq \mu_{1} r,
\end{equation*}
which completes the proof of $ k=1 $. Here $ A_{0} = B_{0} = 0 $, $ A_{1} =l_{1}(0) $ and $ B_{1} = Dl_{1}(0) $.

{\em Step 2}. Next we define a rescaled function:
\begin{equation*}
  u_{1}(y):=\frac{u(ry)-l_{0}(ry)}{\mu_{1}r},
\end{equation*}
then a routine calculation reveals that $ u_{1} $ satisfies
\begin{equation*}
\left\{
     \begin{alignedat}{2}
       \bigg[\sigma_{1}^{1}\big(|Du_{1}+\xi_{1}|\big)+a_{1}(y)\sigma_{2}^{1}\big(|Du_{1}+\xi_{1}|\big)\bigg] F_{1}(D^{2}u_{1}) & = f_{1}       \quad   &&  \text{in} \ \ B_{1} \cap \{y_{n} > \varphi_{1}(y')\}    ,    \\
          u_{1}(y)  & = g_{1}(y)    \quad  &&   \text{on}   \ \  B_{1}  \cap \{y_{n} = \varphi_{1}(y')\},        \\
     \end{alignedat}
     \right.
\end{equation*}
where
\begin{equation*}
\left\{
     \begin{aligned}
& F_{1}(\mathrm{M}):= \frac{r}{\mu_{1}} F\bigg(\frac{\mu_{1}}{r}\mathrm{M}\bigg), \quad  a_{1}(y):=a(ry);   \\
& \sigma_{1}^{1}(t):= \frac{\mu_{1}}{r} \sigma_{1}(\mu_{1}t),   \quad   \sigma_{2}^{1}(t):= \frac{\mu_{1}}{r} \sigma_{2}(\mu_{1}t);   \\
& f_{1}(y):= f(ry),   \quad  g_{1}(y):= \frac{g(ry)-l_{0}(ry)}{\mu_{1}r};    \\
& \xi_{1}:= \frac{1}{\mu_{1}}Dl_{0}, \quad \text{and}  \quad \varphi_{1}(y):= r^{-1} \varphi(ry').
\end{aligned}
     \right.
\end{equation*}

It can be easily checked that
\begin{enumerate}[label=(\roman*)]
\item  $ F_{1} $ satisfies \hyperref[A1]{\bf (A1)} with the same constants $ (\lambda, \Lambda) $;

\item  $ \sigma_{1}^{1}(\cdot), \sigma_{2}^{1}(\cdot) $ satisfy \hyperref[A5a]{\bf (A5a)} and $ \sigma_{1}^{1}(\cdot), \sigma_{2}^{1}(\cdot) \in \mathscr{Q} $;

\item  By {\em Step 1}, it can be seen that $ ||u_{1}||_{L^{\infty}(B_{1}\cap \{y_{n}>\varphi_{1}(y')\})} \leq 1$;

\item  $  ||f_{1}||_{L^{\infty}(B_{1}\cap \{y_{n}>\varphi_{1}(y')\})} \leq  ||f||_{L^{\infty}(B_{1}\cap \{y_{n}>\varphi(y')\})} \leq \epsilon $ by reduction;

\item  $ ||D^{2}\varphi_{1}||_{L^{\infty}(B_{1}\cap \{y_{n}>\varphi_{1}(y')\})}  \leq r||D^{2}\varphi||_{L^{\infty}(B_{1}\cap \{y_{n}>\varphi(y')\})} \leq  ||D^{2}\varphi||_{L^{\infty}(B_{1}\cap \{y_{n}>\varphi(y')\})} $;

\item  A straightforward computation yields that for any $ y,z \in B_{1}  \cap \{y_{n} = \varphi_{1}(y')\} $
\begin{align*}
|Dg_{1}(y) - Dg_{1}(z)| & = \frac{1}{\mu_{1}} |Dg(ry) - Dg(rz)|  \leq \frac{r^{\alpha}}{\mu_{1}} ||g||_{C^{1,\alpha}(B_{1}\cap \{y_{n}>\varphi(y')\})} |y-z|^{\alpha}  \\
& \leq \frac{1}{2C}r^{\alpha-\alpha'} ||g||_{C^{1,\alpha}(B_{1}\cap \{y_{n}=\varphi(y')\})} |y-z|^{\alpha}   \\
&  \leq ||g||_{C^{1,\alpha}(B_{1}\cap \{y_{n}=\varphi(y')\})} |y-z|^{\alpha}  \quad  \text{for} \  r \ \text{small enough},
\end{align*}
then we have that
\begin{equation*}
  ||g_{1}||_{C^{1,\alpha}(B_{1}\cap \{y_{n}>\varphi_{1}(y')\})} \leq ||g||_{C^{1,\alpha}(B_{1}\cap \{y_{n}=\varphi(y')\})} \leq 1.
\end{equation*}
\end{enumerate}

Hence, we can apply Lemma \ref{Se4:lemma1} and use the same argument as in {\em Step 1} to obtain an affine function $ l_{1} $ such that
\begin{equation*}
  \sup_{y \in B_{r}\cap \{y_{n}>\varphi_{1}(y')\}} |u_{1}(y) - l_{1}(y)| \leq \mu_{1} r.
\end{equation*}
Iterating inductively the above reasoning, we define a rescaled function:
\begin{equation*}
  u_{k+1}(y):=\frac{u_{k}(ry)-l_{k}(ry)}{\mu_{k+1}r},
\end{equation*}
then simple calculations show that $ u_{k+1} $ solves
\begin{equation*}
\left\{
     \begin{alignedat}{2}
       & \bigg[\sigma_{1}^{k+1}\big(|Du_{k+1}+\xi_{k+1}|\big)+a_{k+1}(y)\sigma_{2}^{k+1}\big(|Du_{k+1}+\xi_{k+1}|\big)\bigg] F_{k+1}(D^{2}u_{k+1})   = f_{k+1}      \\
          &  \qquad  \qquad  \qquad  \qquad  \qquad  \qquad  \qquad  \qquad   \qquad  \qquad  \qquad  \qquad  \qquad \qquad    \text{in} \quad B_{1} \cap \{y_{n} >  \varphi_{k+1}(y')\},         \\
         &  u_{k+1}(y) = g_{k+1}(y)   \qquad  \qquad  \qquad  \qquad  \qquad   \qquad  \qquad  \qquad  \qquad  \qquad \text{on}   \quad   B_{1}  \cap \{y_{n} = \varphi_{k+1}(y')\},           \\
     \end{alignedat}
     \right.
\end{equation*}
where
\begin{equation*}
\left\{
     \begin{aligned}
& \sigma_{1}^{k+1}(t):= \frac{\mu_{1}\mu_{2}\cdots\mu_{k+1}}{r^{k+1}} \sigma_{1}(\mu_{1}\mu_{2}\cdots\mu_{k+1}), \quad  \sigma_{2}^{k+1}(t):= \frac{\mu_{1}\mu_{2}\cdots\mu_{k+1}}{r^{k+1}} \sigma_{2}(\mu_{1}\mu_{2}\cdots\mu_{k+1});     \\
& F_{k+1}(\mathrm{M}):= \frac{r^{k+1}}{\mu_{1}\mu_{2}\cdots\mu_{k+1}} F\big(\frac{\mu_{1}\mu_{2}\cdots\mu_{k+1}}{r^{k+1}} \mathrm{M}\big), \quad    a_{k+1}(y):= a(r^{k+1}y);      \\
& f_{k+1}(y):=f(r^{k+1}y),   \quad  g_{k+1}(y):= \frac{g(r^{k+1}y)-l_{0}(r^{k+1}y)-\mu_{1}rl_{1}(r^{k}y)-\mu_{1}\mu_{2}\cdots\mu_{k}r^{k}l_{k}(ry)}{\mu_{1}\mu_{2}\cdots\mu_{k+1}r^{k+1}};        \\
& \xi_{k+1}:=\frac{1}{\mu_{k+1}} Dl_{k}  + \frac{1}{\mu_{k}\mu_{k+1}} Dl_{k-1}  +  \frac{1}{\mu_{k-1}\mu_{k}\mu_{k+1}}Dl_{k-2}  +  \cdots \frac{1}{\mu_{1}\mu_{2}\cdots\mu_{k+1}}Dl_{0} \quad \text{and}  \\
& \varphi_{k+1}(y):= r^{-(k+1)} \varphi(r^{k+1}y').
\end{aligned}
     \right.
\end{equation*}

Analogously, we can verify the hypotheses of Lemma \ref{Se4:lemma1} for $ F_{k+1}, f_{k+1}, g_{k+1}, \xi_{k+1}, \varphi_{k+1}, a_{k+1}, \sigma_{1}^{k+1} $ and $ \sigma_{2}^{k+1} $. Therefore, we can apply Lemma \ref{Se4:lemma1} again for $ u_{k+1}$ and use the same argument as in {\em Step 1} to obtain an affine function $ l_{k+1} $ such that
\begin{equation*}
  \sup_{y \in B_{r}\cap \{y_{n}>\varphi_{k+1}(y')\}} |u_{k+1}(y) - l_{k+1}(y)| \leq \mu_{1} r.
\end{equation*}

{\em Step 3}. Scaling back to the solution $ u $, we have that
\begin{equation*}
  \sup_{y \in B_{r^{k}}\cap \{y_{n}>\varphi(y')\}} |u(y) - l_{k}(y)| \leq \bigg(\prod_{i=1}^{k}\mu_{i}\bigg)r^{k},
\end{equation*}
where $ l_{k} $ is given by
\begin{equation*}
  l_{k}(y):= l_{0}(y) + \sum_{i=1}^{k-1} l_{i}(r^{-i}y)\bigg(\prod_{j=1}^{i}\mu_{i}\bigg)r^{i}.
\end{equation*}
Writing $ l_{k}(y) $ as $ A_{k} + B_{k} \cdot y $, then it concludes that \hyperref[ii]{\text{(ii)}} in {\em Claim} also holds true. Once {\em Claim} is proved, the rest of proof of Proposition \ref{Se4:prop1} can follow \cite[Section 8]{APPT22} or \cite[Section 3.3]{WJ26}.
\end{proof}

\subsection{Proof of Theorems \ref{Thm1} and \ref{Thm2}} With these preparations, we are finally ready to present the proof of Theorems \ref{Thm1} and \ref{Thm2}.

\begin{proof}[{Proof of Theorem~\ref{Thm1}}]

The proof of Theorem~\ref{Thm1} can be completed by combining Proposition \ref{Se4:prop1}, Theorem \ref{Se2:thm3} and covering $ \partial \Omega $ with a finite number of such neighborhoods.
\end{proof}

\begin{proof}[{Proof of Theorem~\ref{Thm2}}]
From Remark \ref{Section3:rmk2}, we have that
\begin{equation*}
  ||Du||_{L^{\infty}(B_{1} \cap \{y_{n}>\varphi(y')\})} \leq C=C(\lambda, \Lambda, n, \sigma_{1}, \sigma_{2}, \epsilon_{0}, ||g||_{C^{0,1}(\partial \Omega)}).
\end{equation*}
We observed that $ u $ is a viscosity solution to
\begin{equation*}
\left\{
     \begin{alignedat}{2}
       F(D^{2}u) & = \widetilde{f}        \quad   &&  \text{in} \ \ B_{1} \cap \{y_{n} > \varphi(y')\}    ,    \\
          u(y)  & = g(y)    \quad  &&   \text{on}   \ \  B_{1}  \cap \{y_{n} = \varphi(y')\},        \\
     \end{alignedat}
     \right.
\end{equation*}
where $ \widetilde{f}(y):= \frac{f(y)}{(\sigma_{1}(|Du|)+a(y)\sigma_{2}(|Du|))} $. We note from \hyperref[A5b]{\bf (A5b)} that $ \sigma_{1}(t) \geq c_{0} $ for $ 0< t < t_{0} $, then it follows that
\begin{equation*}
  ||\widetilde{f}||_{L^{\infty}(B_{1} \cap \{y_{n} > \varphi(y')\})}  \leq \frac{2}{c_{0}} ||f||_{L^{\infty}(B_{1} \cap \{y_{n} > \varphi(y')\})}.
\end{equation*}
Applying \cite[Theorem 2.1]{AS23} for $ u $ to get $ u \in C^{1,\beta}(B_{1} \cap \{y_{n} > \varphi(y')\}) $ with the estimate
\begin{equation}\label{Se4:eq9}
\begin{aligned}
  & ||u||_{C^{1,\beta}(B_{1} \cap \{y_{n} > \varphi(y')\})} \\
   \leq & C \left(||u||_{L^{\infty}(\overline{B_{1} \cap \{y_{n} > \varphi(y')\}})}   + ||g||_{C^{1,\alpha}(\overline{B_{1} \cap \{y_{n} > \varphi(y')\}})} + ||f||_{L^{\infty}(B_{1} \cap \{y_{n} > \varphi(y')\})}\right),
\end{aligned}
\end{equation}
where $ C > 0 $ is a universal constant.

Ultimately, the proof of Theorem~\ref{Thm2} can be completed by combining \eqref{Se4:eq9}, \cite[Theorem 1.2]{BBO23} and covering $ \partial \Omega $ with a finite number of such neighborhoods.
\end{proof}

\section*{Acknowledgment}
The first author would like to thank Professor Kai Zhang (University of Granada) for sharing his insight about boundary regularity theory for fully nonlinear equations during the preparation of this manuscript. F. Jiang has been supported by the National Natural Science Foundation of China (No. 12271093) and the Jiangsu Provincial Scientific Research Center of Applied Mathematics (Grant No. BK20233002), and Shanghai Institute for Mathematics and Interdisciplinary Sciences (SIMIS) under grant number SIMIS-ID-2025-AD.


\begin{thebibliography}{GPPSVG17}
\bibliographystyle{alpha}

%
%
%
%



\bibitem[AKSZ07]{AKSZ07}
H.~Aikawa, T.~Kilpel\"{a}inen, N.~Shanmugalingam and X.~Zhong, Boundary Harnack principle for $ p$-harmonic functions in smooth Euclidean domains. \newblock {\em Potential Anal.}, \textbf{26} (2007), 281--301.




\bibitem[APPT22]{APPT22}
 P.~Andrade, D.~Pellegrino, E.~Pimentel and E.~Teixeira, $C^{1}$-regularity for degenerate diffusion equations. \newblock {\em Adv. Math.}, \textbf{409} (2022), part B, Paper No. 108667, 34 pp.




\bibitem[AN25]{AN25}
P.~Andrade and M.~Nascimento, Interior regularity estimates for fully nonlinear equations with arbitrary nonhomogeneous degeneracy laws. \newblock {\em arXiv:2501.03357.}, 2025.



\bibitem[AS23]{AS23}
D.~J.~Ara\'{u}jo and B.~Sirakov, Sharp boundary and global regularity for degenerate fully nonlinear elliptic equations. \newblock {\em J. Math. Pures Appl.}, \textbf{169} (2023), 138--154.



\bibitem[ART15]{ART15}
      D.~J.~Ara\'{u}jo, G.~Ricarte and E.~V.~Teixeira, Geometric gradient estimates for solutions to degenerate elliptic equations, \newblock {\em Calc. Var. Partial Differential Equations.},  \textbf{53} (2015), 605--625.



\bibitem[BBO23]{BBO23}
S.~Baasandorj, S.~S.~Byun and J.~Oh, $ C^{1}$ regularity for some degenerate/singular fully nonlinear elliptic equations. \newblock {\em Appl. Math. Lett.}, \textbf{146} (2023), Paper No. 108830, 10 pp.



\bibitem[BBLL24]{BBLL24}
S.~Baasandorj, S.~S.~Byun, K.~A.~Lee and S.~C.~Lee, Global regularity results for a class of singular/degenerate fully nonlinear elliptic equations. \newblock {\em Math. Z.}, \textbf{306} (2024), no. 1, Paper No. 1, 26 pp.







\bibitem[BSRR23]{BSRR23}
E.~Bezerra J\'{u}nior, J.~V.~da Silva, G.~Rampasso and G.~Ricarte, Global regularity for a class of fully nonlinear PDEs with unbalanced variable degeneracy. \newblock {\em J. Lond. Math. Soc.}, \textbf{108} (2023), 622--665.



\bibitem[BD14]{BD14}
I.~Birindelli and F.~Demengel, $ C^{1,\beta}$ regularity for Dirichlet problems associated to fully nonlinear degenerate elliptic equations. \newblock {\em ESAIM Control Optim. Calc. Var.}, \textbf{20} (2014), 1009--1024.




%




\bibitem[CC95]{CC95}
L.~Caffarelli and X.~Cabr\'{e}, \newblock {\em Fully nonlinear elliptic equations}, volume 43. American Mathematical Society, 1995.





\bibitem[CJ17]{CJ17}
T.~W.~Chaundy and A.~E.~Jolliffe,
The uniform convergence of a certain class of trigonometrical series.
\newblock {\em Proc. London Math. Soc. (2)}, \textbf{15} (1917), 214--216.



\bibitem[CIL92]{CIL92}
M.~G.~Crandall, H.~Ishii and P.~L.~Lions, User's guide to viscosity solutions of second order partial differential equations. \newblock {\em Bull. Amer. Math. Soc. (N.S.).}, \textbf{27} (1992), 1--67.



\bibitem[De21]{De21}
C.~De Filippis, Regularity for solutions of fully nonlinear elliptic equations with nonhomogeneous degeneracy. \newblock {\em Proc. Roy. Soc. Edinburgh Sect. A.}, \textbf{151} (2021), 110--132.


\bibitem[GMV21]{GMV21}
H.~Gissy, S.~Miihkinen and J.~A.~Virtanen,
On the exponential integrability of conjugate functions.
\newblock {\em J. Fourier Anal. Appl.}, \textbf{27} (2021),
Paper No. 87, 17 pp.



\bibitem[HH21]{HH21}
P.~Harjulehto and P.~H\"{a}st\"{o}, Double phase image restoration. \newblock {\em J. Math. Anal. Appl.}, \textbf{501} (2021), no. 1, Paper No. 123832, 12 pp.





\bibitem[IS13]{IS13}
        C.~Imbert and L.~Silvestre, $ C^{1,\alpha} $ regularity of solutions of some degenerate fully non-linear elliptic equations. \newblock {\em Adv. Math.}, \textbf{233} (2013), 196--206.





\bibitem[LL23]{LL23}
D.~Li and X.~Li, Pointwise boundary $ C^{1,\alpha}$ estimates for some degenerate fully nonlinear elliptic equations on $ C^{1,\alpha}$-domains, \newblock {\em arXiv:2305.12847.}, 2023.



\bibitem[LZ26]{LZ26}
Y.~Lian and K.~Zhang, Boundary H\"{o}lder regularity for elliptic equations on Reifenberg flat domains, \newblock {\em  Manuscripta Math.}, \textbf{177} (2026), Paper No. 14, 21 pp.


\bibitem[LWZ20]{LWZ20}
Y.~Lian, L.~Wang and K.~Zhang, Pointwise regularity for fully nonlinear elliptic equations in general forms, \newblock {\em 	arXiv:2012.00324.}, 2020.




\bibitem[MS06]{MS06}
 E.~Milakis and L.~Silvestre, Regularity for fully nonlinear elliptic equations with Neumann boundary data. \newblock {\em Comm. Partial Differential Equations.}, \textbf{31} (2006), 1227--1252.




\bibitem[WJ26]{WJ26}
     J.~Wang and F.~Jiang, Regularity of solutions for degenerate or singular fully nonlinear integro-differential equation. \newblock {\em Commun. Contemp. Math.},  \textbf{28} (2026), Paper No. 2550080, 36 pp.







\bibitem[Z93]{Z93}
V.~Zhikov, Lavrentiev phenomenon and homogenization for some variational problems. \newblock {\em C. R. Acad. Sci. Paris S\'{e}r. I Math.}, \textbf{316} (1993), 435--439.



\bibitem[Z02]{Z02}
A.~Zygmund, {\em Trigonometric series. Vol. I, II. Third edition}. With a foreword by Robert A. Fefferman. Cambridge Mathematical Library. Cambridge University Press, 2002.



\end{thebibliography}
\end{document}